\newtheorem{theorem}{Theorem}[section]
\newtheorem{lemma}[theorem]{Lemma}
\theoremstyle{definition}
\newtheorem{definition}[theorem]{Definition}
\theoremstyle{remark}
\numberwithin{equation}{section}
\begin{document}

\title{Decomposability of Nonnegative $r$-Potent Matrices}

\author{Rashmi Sehgal Thukral}
\address{Department of Mathematics, Jesus and Mary College, University of Delhi, Chanakyapuri, New Delhi 110056}
\email{rashmi.sehgal@yahoo.co.in}

\author{Dr. Alka Marwaha}
\address{Department of Mathematics, Jesus and Mary College, University of Delhi, Chanakyapuri, New Delhi 110056}
\email{alkasamta@gmail.com}

\subjclass[2000]{Primary 47D03; Secondary 15B99}

\date{April 13, 2014.}

\keywords{Decomposition, r-Potent}

\begin{abstract}
We consider nonnegative $r$-potent matrices with finite dimensions and study their decomposability. We derive the precise conditions under which an $r$-potent matrix is decomposable. We further determine a general structure for the $r$-potent matrices based on their decomposibility. Finally, we establish that semigroups of $r$-potent matrices are also decomposable.
\end{abstract}

\maketitle

\section{Introduction}
A square matrix $\mathbf{A}$ is said to be idempotent \cite{HornAndJohnson} if and only if $\mathbf{A}^2=\textbf{A}$. The concept of $r$-potent matrices \cite{McCloskeyFirstPaper} is a generalization of idempotent matrices where a matrix $\mathbf{A}$ is said to be $r$-potent, for some natural number $r$, if and only if $\mathbf{A}^r=\mathbf{A}$. While every idempotent matrix is $r$-potent, the reverse is not necessarily true. That is, an $r$-potent matrix may or may not be idempotent. For example, the matrix
\begin{align}
&\left[\begin{array}{ccc}0& 1& 0\\ 0& 0& 1\\ 1& 0& 0\end{array}\right]
\end{align}
is $4$-potent (commonly known as quadripotent), but not idempotent. Therefore, it makes sense to study $r$-potent matrices separately. 

Several properties of $r$-potent matrices have been studied by McCloskey \cite{McCloskeyFirstPaper}. Decomposability of general $r$-potent matrices has however not been studied so far. On the other hand, decomposability of {\it idempotent} matrices has been studied by Marwaha \cite{AlkaMarwaha}. The tools developed in \cite{AlkaMarwaha} for idempotent matrices do not apply to $r$-potent matrices. The main focus of this paper is therefore to develop tools so that we can study decomposability of $r$-potent matrices. 

\subsubsection*{Outline of the paper:} The rest of the paper is organized as follows. We provide an overview of the known results used in this paper in Section~\ref{Sec:OverviewKnownResults}. In Section~\ref{Sec:DecomposabilitySingleMatrix}, we state our main result on decomposability of $r$-potent matrices and provide its  proof. We provide our results on the structure of $r$-potent matrices in Section~\ref{Sec:StructureRPotents}. In Section~\ref{Sec:DecomposabilityKronRPotents}, we establish  decomposability of the kronecker products of $r$-potent matrices. Section~\ref{Sec:DecomposabilitySemiGroupsRPotents} states our main results and corresponding proofs on decomposability of semigroups of $r$-potent matrices. Finally, we state our results on decomposability of permutation matrices in Section~\ref{Sec:DecomposabilityPermutationMatrices}.

\subsubsection*{Notation:} Capital letters $\mathbf{A},\mathbf{B},\cdots$ and small letters $\mathbf{a},\mathbf{b},\ldots$ are used to denote matrices and vectors, respectively, over $\mathbb{R}$, where $\mathbb{R}$ stands for the space of real numbers. For any set of vectors $\{\mathbf{v}_1,\mathbf{v}_2,\ldots\}$, $\vee\{\mathbf{v}_1,\mathbf{v}_2,\ldots\}$ denotes the (closed) linear span of the vectors $\{\mathbf{v}_1,\mathbf{v}_2,\ldots\}$. $\mathcal{M}_n(\mathbb{R})$ stands for the space of all $n\times n$ matrices with entries from $\mathbb{R}$. A matrix $\mathbf{A}=(a_{ij})\in\mathcal{M}_n(\mathbb{R})$ is called nonnegative if $a_{ij}\geq 0,\forall i,j=1,2,\ldots,n$. A nonnegative semigroup in $\mathcal{M}_n(\mathbb{R})$ is a semigroup of nonnegative matrices. Given two matrices $\mathbf{A}$ and $\mathbf{B}$, $\mathbf{A}\otimes\mathbf{B}$ denotes their kronecker product (\cite{Lutkepohl}, pp.3). Since every $n\times n$ real matrix represents a linear operator on $\mathbf{R}^n$ and vice versa (see \cite{Herstein}, pp.276), we use capital letters $\mathbf{A},\mathbf{B},\cdots$ to denote finite dimensional linear transformation on $\mathbb{R}^n$ and $n\times n$ real matrices interchangeably. $R(\mathbf{A})$ and $N(\mathbf{A})$ are used to denote the range space and the null space of linear transformation $\mathbf{A}$. Further, $\text{rank}(\mathbf{A})$  denotes the rank of $\mathbf{A}$ and $\text{Nullity}(\mathbf{A})$ denotes the dimension of the null space of $\mathbf{A}$. Finally, $\mathbf{A}^{-1}$ stands for the inverse of square matrix $\mathbf{A}$.

\section{Definitions and Overview of Known Results}
\label{Sec:OverviewKnownResults}

\begin{definition}\cite{AlkaMarwaha}
A matrix $\mathbf{A} \in\mathcal{M}_n(\mathbb{R})$ is said to be decomposable if there exists a proper subset  $\{i_1, i_2,\ldots,i_k\}$ of $\{1,2,\ldots,n\}$ such that $\vee\{\mathbf{A}\mathbf{e}_{i_1},\mathbf{A}\mathbf{e}_{i_2},\cdots,\mathbf{A}\mathbf{e}_{i_k}\}$ is contained in $\vee\{\mathbf{e}_{i_1},\mathbf{e}_{i_2},\ldots,\mathbf{e}_{i_k}\}$, where $\{\mathbf{e}_1,\mathbf{e}_2,\ldots,\mathbf{e}_n\}$ is the standard ordered basis of $\mathbb{R}^n$. 
\end{definition}

The following equivalent definition of decomposability, given as a proposition in \cite{AlkaMarwaha}, will be used throughout this paper:
\begin{definition}
\label{Def:SimplifiedDefinitionOfDecomposability}
A matrix $\mathbf{A}\in\mathcal{M}_n(\mathbb{R})$ is decomposable if and only if there exists a permutation matrix $\mathbf{P}$ such that 
\begin{align}
\mathbf{P}^{-1}\mathbf{A}\mathbf{P}&= \left[\begin{array}{cc} \mathbf{B}\ \ \mathbf{C}\\ \mathbf{0}\ \ \mathbf{D}\end{array}\right]
\end{align}
where $\mathbf{B}$ and $\mathbf{D}$ are square matrices. A matrix is said to be {\it indecomposable} if it is not decomposable. 
\end{definition}

The definition given above for decomposability of a single matrix is extended in the obvious manner to a semigroup in $\mathcal{M}_n(\mathbb{R})$ (\cite{RadjaviBook}, pp.104), where a common permutation matrix $\mathbf{P}$ decomposes every matrix in the semigroup. 

\begin{definition}(\cite{AlkaMarwahaThesis}, pp.7)
A linear operator $\mathbf{A}$ defined on an $n$-dimensional vector space $\mathcal{V}$ is {\it decomposable} if there exists a standard subspace (subspace spanned by a subset $\{\mathbf{e}_1,\mathbf{e}_2,\ldots,\mathbf{e}_k\}$ of standard basis vectors $\mathbf{e}_1,\mathbf{e}_2,\ldots,\mathbf{e}_n$) $\mathcal{M}$ of $\mathcal{V}$ such that $\mathcal{M}$ is invariant under the action of $\mathbf{A}$, that is, $\mathbf{A}(\mathcal{M})\subseteq \mathcal{M}$.
\end{definition}

\subsection{Properties of Nonnegative Matrices}
\label{Subsec:PropertiesOfNonnegativeMatrices}
Our focus in this paper is on decomposability of nonnegative matrices in $\mathcal{M}_n(\mathbb{R})$. For nonnegative matrices, we have the following properties (see \cite{HornAndJohnson}, pp.487-528, and \cite{CarlMeyer}, pp.661-687):
\begin{enumerate}
\item For every pair of indices $i$ and $j$, there exists a natural number $m$ such that $(\mathbf{A}^m)_{ij}$ is not equal to zero.
\item Period of an index $i$ is the greatest common divisor of all natural numbers $m$ such that $(\mathbf{A}^m)_{ii}>0$. If $\mathbf{A}$ is indecomposable, then period of every index is the same and is called the {\it period of $\mathbf{A}$}.
\item {\it Primitive Matrix}: A nonnegative matrix $\mathbf{A}$ is called primitive if its $m$-th power, $\mathbf{A}^m$, is positive for some natural number $m$ (that is, the same $m$ works for all pairs of indices). 
\item Primitive matrices are the same as indecomposable aperiodic nonnegative matrices. 
\end{enumerate}
Please note that, as a consequence of the above statements, every positive matrix is primitive and every primitive matrix is indecomposable. In other words, every positive matrix is indecomposable. Therefore, {\it we restrict our analysis in this paper to the study of decomposability of nonnegative matrices}.

\subsection{Perron-Forbenius Theorem for Indecomposable Nonnegative Matrices}
\begin{theorem}(see \cite{HornAndJohnson}, pp.487-528, and \cite{CarlMeyer}, pp.661-687)
\label{Th:PerronFrobeniusTheormForNonNegative}
Let $\mathbf{A}$ be an $n\times n$ nonnegative indecomposable matrix with period $h$ and spectral radius $\rho$. Then the following statements hold:
\begin{enumerate}
\item The number $\rho$ is a positive real number, is an eigenvalue of matrix $\mathbf{A}$, and is referred to as the Perron-Frobenius eigenvalue of $\mathbf{A}$. 
\item The Perron-Frobenius eigenvalue $\rho$ is simple.
\item Matrix $\mathbf{A}$ has exactly $h$ complex eigenvalues with absolute value $\rho$. Each one of these eigenvalues is a simple root of the characterisitic polynomial and is the product of $\rho$ with an $h$-th root of unity. 
\item If $\mathbf{A}$ is a nonnegative primitive matrix in $\mathcal{M}_n(\mathbb{R})$, then the square matrix $\mathbf{A}^{n^2-2n+2}$ is positive.
\end{enumerate}
\end{theorem}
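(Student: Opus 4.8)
The plan is to derive the four assertions in sequence from two elementary facts about an $n\times n$ nonnegative indecomposable matrix $\mathbf{A}$: that $(\mathbf{I}+\mathbf{A})^{n-1}$ is entrywise positive (the combinatorial content of indecomposability, cf.\ property~(1) of Section~\ref{Subsec:PropertiesOfNonnegativeMatrices}, since a walk of length at most $n-1$ joins every ordered pair of vertices of the digraph of $\mathbf{A}$), and the Collatz--Wielandt variational formula
\[
\rho \;=\; \max_{\mathbf{x}\ge\mathbf{0},\,\mathbf{x}\ne\mathbf{0}}\;\;\min_{i:\,x_i>0}\;\frac{(\mathbf{A}\mathbf{x})_i}{x_i}.
\]
For a vector $\mathbf{w}$ I write $\abs{\mathbf{w}}$ for the vector of entrywise absolute values, and I will use repeatedly the entrywise bound $\abs{\mathbf{A}\mathbf{w}}\le\mathbf{A}\abs{\mathbf{w}}$.

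For part~(1), the supremum on the right is attained (the objective is upper semicontinuous on the compact simplex), say at $\mathbf{z}$ with value $r$. Multiplying $\mathbf{A}\mathbf{z}-r\mathbf{z}\ge\mathbf{0}$ by the positive matrix $(\mathbf{I}+\mathbf{A})^{n-1}$, which commutes with $\mathbf{A}$, and invoking maximality of $r$ forces $\mathbf{A}\mathbf{z}=r\mathbf{z}$; the same positivity, applied to $\mathbf{z}$ itself, forces $\mathbf{z}>\mathbf{0}$, so $\mathbf{A}$ has a strictly positive eigenvector with eigenvalue $r$, and $r>0$ because $\mathbf{A}\ne\mathbf{0}$. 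Feeding $\abs{\mathbf{w}}$ into the Collatz--Wielandt expression for an arbitrary eigenpair $(\lambda,\mathbf{w})$ gives $\abs{\lambda}\le r$, hence $r=\rho$. For part~(2), geometric simplicity follows because a second eigenvector $\mathbf{y}$ at $\rho$ independent of $\mathbf{z}$ could be combined with $\mathbf{z}$ to give a nonnegative eigenvector with a zero coordinate, contradicting positivity of $(\mathbf{I}+\mathbf{A})^{n-1}$; algebraic simplicity then follows from the existence of a strictly positive left eigenvector $\mathbf{u}^{T}$ (apply part~(1) to $\mathbf{A}^{T}$, which is also indecomposable) together with the standard fact that $\langle\mathbf{u},\mathbf{z}\rangle\ne 0$ rules out a nontrivial Jordan block at $\rho$.

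Part~(3) I would obtain by the Frobenius rotation argument. For a peripheral eigenvalue $\rho e^{i\theta}$ with eigenvector $\mathbf{w}$, equality must hold throughout $\rho\abs{\mathbf{w}}=\abs{\mathbf{A}\mathbf{w}}\le\mathbf{A}\abs{\mathbf{w}}$; comparing $\abs{\mathbf{w}}$ with the Perron eigenvector shows $\abs{\mathbf{w}}>\mathbf{0}$, and the equality case pins down the arguments of the entries of $\mathbf{w}$, so that conjugating $\mathbf{A}$ by the diagonal unitary $\mathbf{D}=\mathrm{diag}(w_j/\abs{w_j})$ yields $\mathbf{D}^{-1}\mathbf{A}\mathbf{D}=e^{-i\theta}\mathbf{A}$. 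Consequently the peripheral spectrum is invariant under multiplication by $e^{i\theta}$; it is therefore $\rho$ times a finite subgroup of the circle, i.e.\ a full set of $h$-th roots of unity where $h$ is the period, and each peripheral eigenvalue inherits simplicity from $\rho$ via the same similarity.

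Finally, part~(4) is Wielandt's bound and is essentially combinatorial: primitivity means the digraph of $\mathbf{A}$ is strongly connected with cycle lengths of greatest common divisor $1$, and one must show that walks of every length $\ge n^2-2n+2$ exist between every ordered pair of vertices. The explicit constant comes from a Frobenius--number estimate: pick a shortest cycle, of length $s\le n$, through a vertex $v$; the reachable lengths from $v$ to $v$ contain all multiples of $\gcd$ combinations of cycle lengths beyond $s^2-2s$ or so, and padding with simple paths of length $\le n-1$ at each end gives the stated bound after the extremal choice. I expect this last part to be the main obstacle, since the \emph{sharp} constant $n^2-2n+2$ requires the extremal count rather than any soft argument; alternatively, as the whole statement is quoted from \cite{HornAndJohnson} and \cite{CarlMeyer}, one may simply invoke Wielandt's theorem.
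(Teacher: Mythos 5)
This statement is the classical Perron--Frobenius theorem, which the paper does not prove at all: it is quoted from \cite{HornAndJohnson} and \cite{CarlMeyer}, so there is no internal proof to compare against, and your closing remark that one may ``simply invoke Wielandt's theorem'' is in fact exactly the paper's treatment of all four parts. Judged as a self-contained argument, your sketch of parts (1) and (2) is the standard Collatz--Wielandt route and is sound: upper semicontinuity of $\mathbf{x}\mapsto\min_{i:x_i>0}(\mathbf{A}\mathbf{x})_i/x_i$ on the simplex does hold (for each $i$ with $x_i>0$ the ratio is continuous near $\mathbf{x}$ and dominates the min along the sequence), positivity of $(\mathbf{I}+\mathbf{A})^{n-1}$ forces $\mathbf{A}\mathbf{z}=r\mathbf{z}$ with $\mathbf{z}>\mathbf{0}$, the bound $\lvert\lambda\rvert\le r$ identifies $r=\rho$, and algebraic simplicity via the positive left eigenvector (since $\mathbf{u}^{T}(\mathbf{A}-\rho\mathbf{I})\mathbf{v}=0$ while $\mathbf{u}^{T}\mathbf{z}>0$) is correct.

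Two steps are genuinely incomplete. In part (3), the diagonal-unitary similarity shows the peripheral spectrum is $\rho$ times a finite cyclic group of some order $h'$, and the entrywise relation $a_{ij}\neq 0\Rightarrow d_j=e^{\pm i\theta}d_i$ gives only that $h'$ divides every cycle length of the digraph, hence $h'\mid h$. The reverse direction --- that there really are $h$ peripheral eigenvalues, not fewer --- does not follow from anything you wrote; it requires the cyclic-class (block-cyclic) decomposition of an indecomposable matrix of period $h$, from which one constructs a diagonal unitary exhibiting invariance of the whole spectrum under rotation by $e^{2\pi i/h}$. In part (4), your Frobenius-number padding heuristic (``beyond $s^2-2s$ or so'') yields some primitivity exponent but not the sharp constant $n^2-2n+2$; Wielandt's argument takes a shortest cycle, of length $s\le n-1$ (if $s=n$ the digraph is a single $n$-cycle and $\mathbf{A}$ is not primitive), exploits the positive diagonal entries of $\mathbf{A}^{s}$ on the cycle vertices, and pads with walks of length at most $n-1$, giving the exponent $n+s(n-2)\le n^2-2n+2$. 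Since the paper itself only cites these facts, neither gap affects anything downstream, but both would need to be filled for your proposal to stand as a proof rather than a correct outline of the proof in the cited references.
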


\subsection{Decomposability of Nonnegative Idempotent Matrices}
\label{Subsection:DecomposabilityOfNonNegativeIdempotentMatrices}
We now summarize the results on decomposability of idempotent matrices \cite{AlkaMarwaha}:
\begin{enumerate}
\item Every nonnegative idempotent matrix of rank $k>1$ is decomposable.
\item For an idempotent matrix $\mathbf{A}$, $\text{trace}(\mathbf{A})=\text{rank}(\mathbf{A})$.
\item Let $\mathbf{A}$ be an $n\times n$ nonnegative idempotent matrix of rank $k>1$. Then, the following hold:
\begin{enumerate}
\item Any maximal standard block triangularization of $\mathbf{A}$ has the following two properties:
\begin{enumerate}
\item \label{item1} Each diagonal block is either zero or a positive idempotent matrix of rank one.
\item \label{item2} There are exactly $k$ non-zero diagonal blocks.
\end{enumerate}
\item There exists a standard block triangularization of $\mathbf{A}$ with the above stated properties \ref{item1} and \ref{item2} such that no two consecutive diagonal blocks are zero (so that the total number of diagonal blocks is less than or equal to $2k+1$).
\end{enumerate}
\item Suppose $\mathcal{S}$ is a band (semigroup of nonnegative idempotent matrices) in $\mathcal{M}_n(\mathbb{R})$ with nonnegative members such that $\text{rank}(\mathbf{S})>1$, for all $\mathbf{S}\in\mathcal{S}$. Then, $\mathcal{S}$ is decomposable.
\end{enumerate}

\subsection{}
The following lemma by Heydar Radjavi will be repeatedly used for proving our results in this paper:
\begin{lemma}(\cite{RadjaviBook}, pp.106)
\label{RepeatedlyUsedLemma}
Let $\mathbf{A}$ be a nonnegative matrix such that every positive power of $\mathbf{A}$ has at least one diagonal entry equal to zero. Then, $\mathbf{A}$ is decomposable and has $0$ as an eigenvalue. 
\end{lemma}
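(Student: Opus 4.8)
The plan is to prove the two conclusions in turn: first decomposability, directly, and then the eigenvalue assertion by induction on the order $n$ of $\mathbf{A}$. For decomposability I would argue by contraposition and assume $\mathbf{A}$ is \emph{indecomposable}; the goal is then to exhibit a positive power of $\mathbf{A}$ whose diagonal is strictly positive, contradicting the hypothesis. Since $\mathbf{A}$ is indecomposable, property~(1) of Section~\ref{Subsec:PropertiesOfNonnegativeMatrices} (taken with $j=i$) shows that for each index $i$ the set $S_i=\{\,m\ge 1:(\mathbf{A}^m)_{ii}>0\,\}$ is nonempty, and property~(2) shows that $\gcd S_i$ equals the common period $h$ of $\mathbf{A}$, independently of $i$.

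The elementary point is that $S_i$ is closed under addition, because nonnegativity of $\mathbf{A}$ gives $(\mathbf{A}^{m+m'})_{ii}\ge (\mathbf{A}^m)_{ii}(\mathbf{A}^{m'})_{ii}$; hence $S_i$ is a subsemigroup of $(\mathbb{N},+)$ with greatest common divisor $h$, and a standard fact about numerical semigroups then yields an integer $N_i$ with $kh\in S_i$ for every $k\ge N_i$. Taking $N=\max_i N_i$ (legitimate, as there are finitely many indices) we obtain $(\mathbf{A}^{Nh})_{ii}>0$ for all $i$, i.e.\ $\mathbf{A}^{Nh}$ has a strictly positive diagonal, which is the desired contradiction. (If $h=1$, so that $\mathbf{A}$ is primitive, this already follows from item~(4) of Theorem~\ref{Th:PerronFrobeniusTheormForNonNegative}; the semigroup argument merely handles all periods at once.) Consequently $\mathbf{A}$ is decomposable. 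The order-one case, in which the hypothesis forces $\mathbf{A}=[0]$, is trivial.

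For the eigenvalue assertion I would induct on $n$, the case $n=1$ being immediate. In the inductive step, observe that conjugating by a permutation matrix preserves nonnegativity, preserves the list of eigenvalues, and preserves the property that every positive power has a zero diagonal entry (it only permutes the diagonal of each power); so by the decomposability just proved we may assume $\mathbf{A}=\left[\begin{array}{cc}\mathbf{B}&\mathbf{C}\\\mathbf{0}&\mathbf{D}\end{array}\right]$ with $\mathbf{B},\mathbf{D}$ square of orders $p,q$ satisfying $1\le p,q\le n-1$, the eigenvalues of $\mathbf{A}$ being those of $\mathbf{B}$ together with those of $\mathbf{D}$. Now consider two cases. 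If every positive power of $\mathbf{B}$ has a zero diagonal entry, the induction hypothesis applied to $\mathbf{B}$ gives $0$ as an eigenvalue of $\mathbf{B}$, hence of $\mathbf{A}$. Otherwise $\mathbf{B}^{m_0}$ has a strictly positive diagonal for some $m_0\ge 1$; an easy induction on $k$, using $(\mathbf{B}^{m_0(k+1)})_{ii}\ge(\mathbf{B}^{m_0k})_{ii}(\mathbf{B}^{m_0})_{ii}$, shows that every power $\mathbf{B}^{m_0k}$ has a strictly positive diagonal, so for each $k\ge 1$ the zero diagonal entry of $\mathbf{A}^{m_0k}$ must lie in the $\mathbf{D}$-block. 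Thus every positive power of the $q\times q$ matrix $\mathbf{D}^{m_0}$ has a zero diagonal entry; the induction hypothesis applied to $\mathbf{D}^{m_0}$ then gives $0$ as an eigenvalue of $\mathbf{D}^{m_0}$, and since the eigenvalues of $\mathbf{D}^{m_0}$ are the $m_0$-th powers of those of $\mathbf{D}$, this forces $0$ to be an eigenvalue of $\mathbf{D}$, hence of $\mathbf{A}$.

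The decomposability half is essentially formal once the numerical-semigroup fact is in hand, so I expect the main obstacle to lie in the eigenvalue half, specifically in coping with \emph{periodicity}: an indecomposable block of period $h>1$ has a vanishing diagonal in infinitely many of its powers, so one cannot simply pass to a single high power that clears the $\mathbf{B}$-block once and for all. The mechanism that gets around this is the two-case analysis above, combined with the device of running the induction on $\mathbf{D}^{m_0}$ rather than on $\mathbf{D}$ itself. A secondary point needing a little care is verifying that passage to a permutation-similar matrix preserves precisely the three features of the hypothesis and conclusion on which the argument relies.
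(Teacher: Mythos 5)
The paper does not actually prove this lemma --- it is imported from Radjavi--Rosenthal with only a page citation --- so there is no in-paper argument to measure yours against; I can only assess your proof on its own, and it is correct. The decomposability half is the standard argument: for an indecomposable nonnegative matrix each return-time set $S_i=\{m:(\mathbf{A}^m)_{ii}>0\}$ is a nonempty additive subsemigroup of $\mathbb{N}$ with $\gcd$ equal to the common period $h$, hence contains all sufficiently large multiples of $h$, so a single power $\mathbf{A}^{Nh}$ has strictly positive diagonal, contradicting the hypothesis. (Note that property~(1) of Section~\ref{Subsec:PropertiesOfNonnegativeMatrices} is only true for indecomposable matrices, which is exactly how you use it.) The eigenvalue half is the delicate part, and your two-case induction handles it properly: the inequality $(\mathbf{B}^{m_0(k+1)})_{ii}\ge(\mathbf{B}^{m_0 k})_{ii}(\mathbf{B}^{m_0})_{ii}$ propagates positivity of the diagonal of $\mathbf{B}^{m_0}$ to all $\mathbf{B}^{m_0 k}$, forcing the zero diagonal entry of $\mathbf{A}^{m_0 k}$ into the $\mathbf{D}$-block, and running the induction on $\mathbf{D}^{m_0}$ rather than $\mathbf{D}$ is precisely the right device for coping with periodicity; the spectral mapping theorem then pulls the eigenvalue $0$ back from $\mathbf{D}^{m_0}$ to $\mathbf{D}$. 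The only quibble is the degenerate case $n=1$: under Definition~\ref{Def:SimplifiedDefinitionOfDecomposability} a $1\times 1$ matrix cannot be decomposable, so the lemma must implicitly assume order at least $2$; your base case $\mathbf{A}=[0]$ still supplies the eigenvalue $0$, so the induction is unaffected.
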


\subsection{Spectral Properties of $r$-Potent Matrices}\cite{McCloskeyFirstPaper}
Any $r$-potent matrix has $x^r-x$ as minimal polynomial. Therefore, eigenvalues of an invertible $r$-potent matrix are the $(r-1)$-th roots of unity. That is, the roots are $e^{\frac{2k\pi \iota}{r-1}},k=0,1,\ldots,r-2,$ when not counting multiplicities of the roots. For a singular $r$-potent matrix, zero lies in the spectrum apart from the aforesaid eigenvalues. 

\begin{lemma}\cite{McCloskeyFirstPaper}
\label{LemmaRankEqualsTrace}
For an $r$-potent matrix $\mathbf{A}$, $\text{rank}(\mathbf{A})=\text{trace}(\mathbf{A}^{r-1})$. 
\end{lemma}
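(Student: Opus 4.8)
The plan is to recognize that $\mathbf{A}^{r-1}$ is itself an idempotent matrix and then to invoke the known identity $\text{trace}=\text{rank}$ for idempotents recorded in Section~\ref{Subsection:DecomposabilityOfNonNegativeIdempotentMatrices}.

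First I would note that $\mathbf{A}^r=\mathbf{A}$ implies $\mathbf{A}^{m+r-1}=\mathbf{A}^m$ for every $m\ge 1$ (multiply the defining relation on the left by the appropriate power of $\mathbf{A}$ and iterate). Taking $m=r-1$ gives $(\mathbf{A}^{r-1})^2=\mathbf{A}^{2r-2}=\mathbf{A}^{r-1}$, so $\mathbf{E}:=\mathbf{A}^{r-1}$ is idempotent. Applying the cited fact that every idempotent matrix satisfies $\text{trace}(\mathbf{E})=\text{rank}(\mathbf{E})$ yields $\text{trace}(\mathbf{A}^{r-1})=\text{rank}(\mathbf{A}^{r-1})$.

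Next I would show $\text{rank}(\mathbf{A}^{r-1})=\text{rank}(\mathbf{A})$ by checking that the two range spaces coincide. The inclusion $R(\mathbf{A}^{r-1})\subseteq R(\mathbf{A})$ is clear since $\mathbf{A}^{r-1}=\mathbf{A}\,\mathbf{A}^{r-2}$ (assuming, as is standard here, that $r\ge 2$); for the reverse inclusion, $\mathbf{A}=\mathbf{A}^r=\mathbf{A}^{r-1}\mathbf{A}$ shows $R(\mathbf{A})\subseteq R(\mathbf{A}^{r-1})$. Hence $R(\mathbf{A})=R(\mathbf{A}^{r-1})$, the ranks agree, and combining with the previous step gives $\text{rank}(\mathbf{A})=\text{trace}(\mathbf{A}^{r-1})$, as claimed.

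An alternative spectral argument would use that the minimal polynomial of $\mathbf{A}$ divides $x^r-x$, which has simple roots, so $\mathbf{A}$ is diagonalizable over $\mathbb{C}$ with each eigenvalue either $0$ or an $(r-1)$-th root of unity; raising to the power $r-1$ sends each nonzero eigenvalue to $1$ and $0$ to $0$, whence $\text{trace}(\mathbf{A}^{r-1})$ equals the number of nonzero eigenvalues of $\mathbf{A}$ counted with algebraic multiplicity, which by diagonalizability is $n-\text{Nullity}(\mathbf{A})=\text{rank}(\mathbf{A})$. Neither route presents a genuine obstacle; the only point needing a word of care is the degenerate case $r=1$ (where the identity is either vacuous or must be excluded), and one should note that nonnegativity of $\mathbf{A}$ plays no role in this particular lemma.
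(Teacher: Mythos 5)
Your proposal is correct and follows essentially the same route as the paper: establish that $\mathbf{A}^{r-1}$ is idempotent, invoke the trace-equals-rank fact for idempotents, and then show $\text{rank}(\mathbf{A}^{r-1})=\text{rank}(\mathbf{A})$ (the paper via a chain of rank inequalities, you via the equality of ranges, which is the same observation). Your added spectral argument and the remarks on $r\ge 2$ and the irrelevance of nonnegativity are sound but not needed beyond that.
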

\begin{proof}
We begin by noticing that $\mathbf{A}^{r-1}$ is an idempotent matrix because
\begin{align}
\label{Eq:IdempotencyOfRMinus1thPower}
\mathbf{A}^{2r-2}=\mathbf{A}^{r-2}\mathbf{A}^r=\mathbf{A}^{r-2}\mathbf{A}=\mathbf{A}^{r-1}.
\end{align} 
It then follows from Section~\ref{Subsection:DecomposabilityOfNonNegativeIdempotentMatrices} that $\text{trace}(\mathbf{A}^{r-1})=\text{rank}(\mathbf{A}^{r-1})$. In addition, since the range of $\mathbf{A}^r$ is contained in the range of $\mathbf{A}^{r-1}$, we get
\begin{align*}
\text{rank}(\mathbf{A}^r)\leq \text{rank}(\mathbf{A}^{r-1}) \leq \cdots \leq \text{rank}(\mathbf{A}) = \text{rank}(\mathbf{A}^r).
\end{align*}
This however implies that $\text{rank}(\mathbf{A}^r) = \text{rank}(\mathbf{A}^{r-1}) = \text{rank}(\mathbf{A})$,
which, in turn, yields $\text{rank}(\mathbf{A})=\text{rank}(\mathbf{A}^{r-1})=\text{trace}(\mathbf{A}^{r-1})$.
\end{proof}

\section{Decomposability of $r$-Potent Matrices}
\label{Sec:DecomposabilitySingleMatrix}

Before we analyse decomposability of an $r$-potent matrix, let us look at decomposability of idempotent matrices more closely. It was shown in \cite{AlkaMarwaha} (restated in Section~\ref{Subsection:DecomposabilityOfNonNegativeIdempotentMatrices} of this paper) that an idempotent matrix of $\text{rank} > 1$ is decomposable. An idempotent matrix of rank one, however, may or may not be decomposable. For example, the idempotent matrix 
\begin{align*}
\left(\begin{array}{cc}
1/2&1/2\\
1/2&1/2
\end{array}\right)
\end{align*}
is an {\it indecomposable} idempotent matrix of rank one, while 
\begin{align*}
\left(\begin{array}{cc}
1&0\\
0&0
\end{array}\right)
\end{align*}
is a {\it decomposable} idempotent matrix of rank one. Let us now extend the result in \cite{AlkaMarwaha} for idempotent matrices by stating a condition under which an idempotent matrix of rank one becomes decomposable:
\begin{theorem}
Let $\mathbf{A}$ be a nonnegative idempotent matrix of rank one. Then, $\mathbf{A}$ is decomposable if and only if it has at least one diagonal entry zero.
\end{theorem}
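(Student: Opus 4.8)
The plan is to use the fact (from Section~\ref{Subsection:DecomposabilityOfNonNegativeIdempotentMatrices}) that an idempotent matrix satisfies $\text{trace}(\mathbf{A}) = \text{rank}(\mathbf{A})$, so a rank-one nonnegative idempotent $\mathbf{A}$ has $\text{trace}(\mathbf{A}) = 1$. Since $\mathbf{A}$ has rank one and is nonnegative, it can be written as $\mathbf{A} = \mathbf{u}\mathbf{v}^T$ for suitable vectors, and idempotency $\mathbf{A}^2 = \mathbf{A}$ together with $\mathbf{A} \neq 0$ forces $\mathbf{v}^T\mathbf{u} = 1$; the diagonal entries are then $a_{ii} = u_i v_i$, and they sum to $1$. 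The forward direction is then immediate from Lemma~\ref{RepeatedlyUsedLemma}: if every diagonal entry of $\mathbf{A}$ were positive, then since $\mathbf{A}^r = \mathbf{A}$ for $r=2$ (and more generally $\mathbf{A}^m = \mathbf{A}$ for all $m \geq 1$ by idempotency), every positive power of $\mathbf{A}$ would have all diagonal entries positive, so the contrapositive of Lemma~\ref{RepeatedlyUsedLemma} does not directly apply — instead I argue directly: if $\mathbf{A}$ is decomposable, conjugate by a permutation to get a block form $\left[\begin{smallmatrix}\mathbf{B}&\mathbf{C}\\\mathbf{0}&\mathbf{D}\end{smallmatrix}\right]$; the zero block forces at least one column of $\mathbf{A}$ (after permutation) to have zeros in the rows indexed by the $\mathbf{D}$-block, and combined with $\text{rank} = 1$ this will pin down a zero diagonal entry.

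Let me restructure for clarity. For the direction "at least one zero diagonal entry $\implies$ decomposable": write $\mathbf{A} = \mathbf{u}\mathbf{v}^T$ with $\mathbf{u},\mathbf{v} \geq 0$ and $\mathbf{v}^T\mathbf{u} = 1$. If $a_{ii} = u_i v_i = 0$ for some $i$, then either $u_i = 0$ or $v_i = 0$. If $v_i = 0$, then the standard subspace $\mathcal{M} = \vee\{\mathbf{e}_i\}$ satisfies $\mathbf{A}\mathbf{e}_i = (\mathbf{v}^T\mathbf{e}_i)\mathbf{u} = v_i \mathbf{u} = \mathbf{0} \in \mathcal{M}$, so $\mathbf{A}$ is decomposable (taking the proper subset $\{i\}$). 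If instead $u_i = 0$, then the $i$-th row of $\mathbf{A}$ is zero, i.e. $\mathbf{e}_i^T \mathbf{A} = \mathbf{0}$, so the standard subspace spanned by all basis vectors except $\mathbf{e}_i$ is invariant under $\mathbf{A}$, and again $\mathbf{A}$ is decomposable. Thus a zero diagonal entry always yields decomposability.

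For the converse, "decomposable $\implies$ at least one zero diagonal entry": suppose $\mathbf{A}$ is decomposable, so after conjugating by a permutation matrix $\mathbf{P}$ (which only permutes the diagonal entries, hence does not affect whether some diagonal entry is zero) we may assume $\mathbf{A} = \left[\begin{smallmatrix}\mathbf{B}&\mathbf{C}\\\mathbf{0}&\mathbf{D}\end{smallmatrix}\right]$ with $\mathbf{B} \in \mathcal{M}_k(\mathbb{R})$, $\mathbf{D} \in \mathcal{M}_{n-k}(\mathbb{R})$, $1 \leq k \leq n-1$. Both $\mathbf{B}$ and $\mathbf{D}$ are then nonnegative idempotent (this follows by block-multiplying $\mathbf{A}^2 = \mathbf{A}$), and $\text{rank}(\mathbf{A}) = \text{rank}(\mathbf{B}) + \text{rank}(\mathbf{D}) + (\text{a nonnegative contribution from }\mathbf{C})$; since $\text{rank}(\mathbf{A}) = 1$, at least one of $\mathbf{B}, \mathbf{D}$ has rank zero, i.e. equals the zero matrix. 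A zero diagonal block immediately provides a zero diagonal entry of $\mathbf{A}$, completing the proof. The main obstacle I anticipate is being careful with the rank bookkeeping in the block-triangular form — specifically justifying that a rank-one block-triangular idempotent must have one of its two diagonal blocks equal to zero rather than merely verifying it via trace; using $\text{trace}(\mathbf{A}) = \text{trace}(\mathbf{B}) + \text{trace}(\mathbf{D}) = \text{rank}(\mathbf{B}) + \text{rank}(\mathbf{D}) = \text{rank}(\mathbf{A}) = 1$ with both traces nonnegative integers settles it cleanly.
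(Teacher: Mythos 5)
Your proof is correct, and the converse direction (decomposable $\Rightarrow$ some zero diagonal entry) is essentially the paper's own argument: pass to the permuted block form, observe the diagonal blocks are idempotent, and use $\mathrm{rank}=1$ (your trace bookkeeping, $\mathrm{trace}(\mathbf{B})+\mathrm{trace}(\mathbf{D})=\mathrm{rank}(\mathbf{B})+\mathrm{rank}(\mathbf{D})=1$, is in fact a cleaner justification than the paper's bare assertion that $\mathbf{B}=\mathbf{0}$ or $\mathbf{D}=\mathbf{0}$). Where you genuinely diverge is the forward direction: the paper notes that $\mathbf{A}^m=\mathbf{A}$ for all $m$, so a single zero diagonal entry propagates to every power, and then invokes Radjavi's lemma (Lemma~\ref{RepeatedlyUsedLemma}) to conclude decomposability; you instead factor $\mathbf{A}=\mathbf{u}\mathbf{v}^T$ with $\mathbf{u},\mathbf{v}\geq 0$, $\mathbf{v}^T\mathbf{u}=1$, and from $u_iv_i=0$ exhibit an explicit standard invariant subspace ($\vee\{\mathbf{e}_i\}$ when $v_i=0$, the span of the remaining basis vectors when $u_i=0$). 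Your route is more elementary and self-contained --- it produces the invariant subspace by hand and avoids the black-box lemma --- while the paper's route is shorter and is the template reused throughout the paper for $r$-potents, where no rank-one factorization is available. Two cosmetic points: your opening paragraph briefly misattributes which direction Lemma~\ref{RepeatedlyUsedLemma} serves (it gives ``zero diagonal in all powers $\Rightarrow$ decomposable,'' i.e.\ exactly the forward direction), though your restructured argument is unaffected; and the claim that a nonnegative rank-one matrix admits a factorization with \emph{nonnegative} $\mathbf{u},\mathbf{v}$ deserves one line (if $\mathbf{A}=\mathbf{u}\mathbf{v}^T\geq 0$ is nonzero, the sign pattern of $u_iv_j\geq 0$ forces $\mathbf{u},\mathbf{v}$ to have constant sign, so replace the pair by $(-\mathbf{u},-\mathbf{v})$ if needed).
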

\begin{proof}
We first assume that $\mathbf{A}$ has at least one diagonal entry zero. Then, every positive power of $\mathbf{A}$ has at least one diagonal entry equal to zero because $\mathbf{A}^n=\mathbf{A},$ for all natural numbers $n$. Therefore, the result in \cite{RadjaviBook} (Lemma~\ref{RepeatedlyUsedLemma} of this paper) implies that $\mathbf{A}$ must be decomposable. 

We next assume that $\mathbf{A}$ is decomposable so that 
\begin{align}
\mathbf{P}^{-1}\mathbf{A}\mathbf{P}&=\left(\begin{array}{cc}
\mathbf{B}&\mathbf{C}\\
\mathbf{0}&\mathbf{D}
\end{array}\right)
\end{align}
for some permutation matrix $\mathbf{P}$. In addition, $\text{rank}(\mathbf{A})=1$ gives $\text{rank}(\mathbf{P}^{-1}\mathbf{A}\mathbf{P})=1$, which implies that either $\mathbf{B}=0$ or $\mathbf{D}=0$. It therefore follows that $\mathbf{A}$ has a zero diagonal entry. 
\end{proof}

Before proceeding to the statement of our main result on decomposability of nonnegative $r$-potent matrices in Theorem~\ref{Th:MainResultSection3}, two comments are in order:

 First, note that an $r$-potent matrix of $\text{rank}\leq r-1$ may or may not be decomposable. For example, if we define $\mathbf{A}$ as 
\begin{align*}
\mathbf{A}\mathbf{e}_1&=\mathbf{e}_2\\
\mathbf{A}\mathbf{e}_2&=\mathbf{e}_3\\
\vdots \quad &=\quad \vdots\\
\mathbf{A}\mathbf{e}_{r-1}&=\mathbf{e}_1,
\end{align*}
that is, we consider the matrix
\begin{align*}
\left[\begin{array}{cccc}
0&0&\cdots&1\\
1&0&\cdots&0\\
0&1&\cdots&0\\
\vdots&\vdots&\ddots&\vdots\\
0&0&\cdots&0
\end{array}\right],
\end{align*}
then $\mathbf{A}$ is an indecomposable $r$-potent matrix of rank $r-1$ as we cannot find a nontrivial standard invariant subspace. On the other hand, the matrix
\begin{align*}
\left[\begin{array}{cccc}
[1] &\mathbf{0}&\cdots&\mathbf{0}\\
\mathbf{0}&
\left[\begin{array}{cc}
1/2&1/2\\
1/2&1/2
\end{array}\right]
&\cdots&\mathbf{0}\\
\vdots&\vdots&\ddots&\vdots\\
\mathbf{0}&\mathbf{0}&\vdots&
\left[\begin{array}{ccc}
1/(r-1)&\cdots&1/(r-1)\\
1/(r-1)&\cdots&1/(r-1)\\
\vdots&\ddots&\vdots\\
1/(r-1)&\cdots&1/(r-1)
\end{array}\right]
\end{array}
\right]
\end{align*}
is a decomposable $r$-potent matrix of rank $r-1$. 

Second, it makes sense to analyse the decomposability of $r$-potent matrices of rank $>r-1$ (in addition to the decomposability of $r$-potent matrices of rank $\leq r-1$) because there exist an infinite number of such $r$-potent matrices. The existence of such matrices can be established using the properties of kronecker product. In particular, if $\mathbf{A}$ and $\mathbf{B}$ are two $r$-potent matrices of rank (say) $r-1$ each, then $\mathbf{A}\otimes\mathbf{B}$ would also be an $r$-potent matrix because: (see \cite{Lutkepohl}, pp.38)
\begin{align}
(\mathbf{A}\otimes\mathbf{B})^r&=\mathbf{A}^r\otimes\mathbf{B}^r=\mathbf{A}\otimes\mathbf{B}.
\end{align}
Moreover, (\cite{Lutkepohl}, pp.20)
\begin{align}
\text{rank}(\mathbf{A}\otimes\mathbf{B})&=\text{rank}(\mathbf{A})\cdot\text{rank}(\mathbf{B})=(r-1)^2>r-1.
\end{align}
Since the above analysis holds for any kronecker product, there exist an infinite number of $r$-potent matrices of rank $> r-1$.

To summarize, the above two observations imply that a nonnegative $r$-potent matrix of $\text{rank}\leq r-1$ may or may not be decomposable and there exists an infinite number of $r$-potent matrices of $\text{rank}> r-1$. We next state our main result on decomposability of nonnegative real $r$-potent matrices:

\begin{theorem}\label{Th:MainResultSection3}
For any nonnegative $r$-potent matrix $\mathbf{A}$,
\begin{enumerate}
\item If $\text{rank}(\mathbf{A})>r-1$, then $\mathbf{A}$ is always decomposable.
\item If $\text{rank}(\mathbf{A})\leq r-1$ such that $\mathbf{A}$ is singular and $\mathbf{A}^2,\mathbf{A}^3,\ldots,$ $ \mathbf{A}^{r-1}$ have at least one diagonal entry zero, then $\mathbf{A}$ is decomposable.
\end{enumerate}
\end{theorem}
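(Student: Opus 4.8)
The plan is to treat the two parts separately: part (2) is essentially a direct application of Lemma~\ref{RepeatedlyUsedLemma}, while part (1) requires combining the Perron--Frobenius Theorem (Theorem~\ref{Th:PerronFrobeniusTheormForNonNegative}) with the spectral rigidity of $r$-potent matrices.

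For part (2) (where I take $r\geq 3$, so that the list $\mathbf{A}^2,\ldots,\mathbf{A}^{r-1}$ is nonempty), I would first note that the positive powers of $\mathbf{A}$ are periodic: since $\mathbf{A}^r=\mathbf{A}$ we get $\mathbf{A}^{m+r-1}=\mathbf{A}^m$ for every $m\geq 1$, so every positive power of $\mathbf{A}$ coincides with one of $\mathbf{A},\mathbf{A}^2,\ldots,\mathbf{A}^{r-1}$. The hypothesis already gives a zero diagonal entry in each of $\mathbf{A}^2,\ldots,\mathbf{A}^{r-1}$, so it remains to produce one in $\mathbf{A}$ itself. For this I would use nonnegativity: if $(\mathbf{A}^{r-1})_{ii}=0$, then from $(\mathbf{A}^{m+1})_{ii}=\sum_k \mathbf{A}_{ik}(\mathbf{A}^m)_{ki}\geq \mathbf{A}_{ii}(\mathbf{A}^m)_{ii}\geq 0$ a descending induction on $m$ forces $\mathbf{A}_{ii}=0$. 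Hence every positive power of $\mathbf{A}$ has a zero diagonal entry, and Lemma~\ref{RepeatedlyUsedLemma} yields that $\mathbf{A}$ is decomposable. (The singularity hypothesis is in fact automatic here: $\mathbf{A}^{r-1}$ is idempotent with a zero diagonal entry, hence is not the identity, so $\mathbf{A}$ cannot be invertible.)

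For part (1), I would argue by contradiction: suppose $\text{rank}(\mathbf{A})>r-1$ but $\mathbf{A}$ is indecomposable, and let $h$ and $\rho$ be its period and spectral radius. Since $\mathbf{A}^r=\mathbf{A}$, the minimal polynomial of $\mathbf{A}$ divides $x^r-x=x\prod_{\zeta^{r-1}=1}(x-\zeta)$, which has distinct roots; thus $\mathbf{A}$ is diagonalizable and every eigenvalue of $\mathbf{A}$ is either $0$ or an $(r-1)$-th root of unity (hence of modulus $1$). By Theorem~\ref{Th:PerronFrobeniusTheormForNonNegative}, $\rho$ is a positive eigenvalue, so $\rho=1$; moreover $\mathbf{A}$ has exactly $h$ eigenvalues of modulus $\rho=1$, all simple, and they are precisely the $h$-th roots of unity. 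But every nonzero eigenvalue of $\mathbf{A}$ already has modulus $1=\rho$, so these $h$ eigenvalues are exactly the nonzero eigenvalues of $\mathbf{A}$; since $\mathbf{A}$ is diagonalizable (equivalently, by Lemma~\ref{LemmaRankEqualsTrace}, $\text{rank}(\mathbf{A})=\text{trace}(\mathbf{A}^{r-1})=\sum_i\lambda_i^{r-1}$, and $\lambda_i^{r-1}$ is $1$ or $0$ according as $\lambda_i$ is nonzero or zero), we obtain $\text{rank}(\mathbf{A})=h$. Finally, a primitive $h$-th root of unity is a nonzero eigenvalue of $\mathbf{A}$, hence an $(r-1)$-th root of unity, which forces $h\mid r-1$ and in particular $h\leq r-1$. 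Thus $\text{rank}(\mathbf{A})=h\leq r-1$, contradicting the assumption $\text{rank}(\mathbf{A})>r-1$; therefore $\mathbf{A}$ is decomposable.

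The main obstacle is the chain of identifications in part (1): recognizing that $\rho=1$, that the $h$ dominant eigenvalues exhaust all nonzero eigenvalues of $\mathbf{A}$, that the rank therefore equals $h$, and that the divisibility $h\mid r-1$ then pins $h$ below $r-1$. All of this rests on pairing the Perron--Frobenius structure with the two basic spectral facts about $r$-potent matrices (diagonalizability, and all nonzero eigenvalues lying on the unit circle). Part (2) is comparatively routine once the periodicity of the powers and the nonnegativity trick propagating a zero diagonal entry down from $\mathbf{A}^{r-1}$ to $\mathbf{A}$ are in place.
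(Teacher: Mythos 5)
Your proof is correct, and part (2) takes a genuinely different and cleaner route than the paper. For part (1) you follow essentially the paper's strategy (Perron--Frobenius applied to a putative indecomposable $\mathbf{A}$, combined with the fact that all nonzero eigenvalues of an $r$-potent are $(r-1)$-th roots of unity), but you execute it more carefully: the paper jumps to the claim that \emph{all} $(r-1)$-th roots of unity are simple eigenvalues and hence $\text{rank}(\mathbf{A})=r-1$, whereas in fact Perron--Frobenius only delivers the $h$-th roots of unity for the period $h$; your chain $\text{rank}(\mathbf{A})=h$ and $h\mid r-1$, hence $\text{rank}(\mathbf{A})\leq r-1$, is the correct version of that argument and still yields the contradiction. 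For part (2) the paper runs a three-case analysis on $\text{rank}(\mathbf{A})\in\{r-1\}$, $\{1\}$, and the intermediate range, invoking Perron--Frobenius, primitivity, and the relation between period and the number of peripheral eigenvalues in each case (and some of those steps, e.g.\ the assertion that $\mathbf{A}^k$ must be positive when the period is $k$, are shakier than what the theory actually provides). Your argument bypasses all of this: periodicity $\mathbf{A}^{m+r-1}=\mathbf{A}^m$ reduces everything to the finitely many powers $\mathbf{A},\ldots,\mathbf{A}^{r-1}$, and the nonnegativity estimate $(\mathbf{A}^{m+1})_{ii}\geq \mathbf{A}_{ii}(\mathbf{A}^m)_{ii}$ propagates the zero diagonal entry of $\mathbf{A}^{r-1}$ down to $\mathbf{A}$ itself, after which Lemma~\ref{RepeatedlyUsedLemma} applies in one stroke. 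This is shorter, avoids any appeal to Perron--Frobenius in part (2), and as a bonus exposes two features the paper leaves implicit: the singularity hypothesis is redundant (an idempotent $\mathbf{A}^{r-1}$ with a zero diagonal entry cannot be the identity, so $\mathbf{A}$ cannot be invertible), and the statement tacitly requires $r\geq 3$ (for $r=2$ the list of powers is empty and the rank-one all-$\tfrac12$ idempotent is a counterexample).
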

\begin{proof}
We shall prove the two cases separately:
\begin{enumerate}
\item  Let us assume that $\mathbf{A}$ is indecomposable. Then, the Perron-Frobenius theorem (Theorem~\ref{Th:PerronFrobeniusTheormForNonNegative}) for indecomposable nonnegative matrices is applicable. This implies (substituting $\rho=1$) that the largest real positive eigenvalue of $\mathbf{A}$ (that is, $1$) is a simple root of the characteristic polynomial of $\mathbf{A}$. In addition, all other $(r-1)$-th roots of unity are simple roots of the characteristic polynomial. Moreover, $\mathbf{A}$ has no other eigenvalues. The above statements imply that $\text{rank}(\mathbf{A})=r-1$, which is a contradiction, and hence, $\mathbf{A}$ must be decomposable. \\

\item Let us again assume that $\mathbf{A}$ is indecomposable and consider three jointly exhaustive cases:
\begin{description}
\item[Case 1] $\text{rank}(\mathbf{A})=r-1$\\
Under the assumption of indecomposability, we can apply the Perron-Frobenius theorem (Theorem~\ref{Th:PerronFrobeniusTheormForNonNegative}) so that $1,\alpha,\alpha^2,$ $\ldots, \alpha^{r-2}$, where $\alpha =e^{\frac{2\pi\iota}{r-1}}$, are all simple roots of the characteristic polynomial of $\mathbf{A}$. Therefore,
\begin{align}
\label{Eq:TraceAsSumOfRootsOfUnity}
\text{trace}(\mathbf{A})&=1+\alpha +\alpha^2+\cdots +\alpha^{r-2}=0.
\end{align} 
Since $\mathbf{A}$ is nonnegative, we must have all the diagonal entries of $\mathbf{A}$ as nonnegative. However, this non-negativity, in conjunction with Eqn.~\ref{Eq:TraceAsSumOfRootsOfUnity}, implies that all the diagonal entries of $\mathbf{A}$ must be zero. Furthermore, since $\mathbf{A}$ is an $r$-potent matrix, this further implies that all the digaonal entries of 
\begin{align*}
\mathbf{A}&=\mathbf{A}^r=\mathbf{A}^{2r-1}=\mathbf{A}^{3r-2}=\cdots
\end{align*}
must be zero. Similarly, combining our condition that $\mathbf{A}^2,\mathbf{A}^3,\ldots, \mathbf{A}^{r-1}$ have at least one diagonal entry zero with the fact that $\mathbf{A}^r=\mathbf{A}$, we get that each of the following:
\begin{align*}
\mathbf{A}^2&=\mathbf{A}^{r+1}=\mathbf{A}^{2r}=\mathbf{A}^{3r-1}=\cdots\\
\mathbf{A}^3&=\mathbf{A}^{r+2}=\mathbf{A}^{2r+1}=\mathbf{A}^{3r}=\cdots\\
\vdots\\
\mathbf{A}^{r-1}&=\mathbf{A}^{2r-2}=\mathbf{A}^{3r-3}=\mathbf{A}^{4r-4}=\cdots
\end{align*}
have at least one diagonal entry zero. This, thanks to Lemma~\ref{RepeatedlyUsedLemma}, however implies  that $\mathbf{A}$ must be decomposable, which is a contradiction. \\

\item[Case 2] $\text{rank}(\mathbf{A})=1$\\
As $\mathbf{A}$ is indecomposable, we can again apply the Perron Frobenius theorem (Theorem~\ref{Th:PerronFrobeniusTheormForNonNegative}) so that $1$, being the Perron Frobenius eigenvalue, is  a simple eigenvalue. Moreover, $\text{rank}(\mathbf{A})=1$ implies that there is no other eigenvalue with absolute value $1$. Furthermore, the number of such eigenvalues is always equal to the period of the matrix. Therefore, we must have $\mathbf{A}$ to be aperiodic and hence primitive (Section~\ref{Subsec:PropertiesOfNonnegativeMatrices}). Now, for every primitive matrix $\mathbf{A}$ of order $n$, $\mathbf{A}^{n^2-2n+2}$ is a positive matrix (from Theorem~\ref{Th:PerronFrobeniusTheormForNonNegative}). Finally, since $\mathbf{A}$ is singular and $\text{rank}(\mathbf{A})=1$, the matrix $\mathbf{A}$ must be a square matrix of size $2\times 2$ or higher. We can now argue that 
\begin{align*}
\text{for }n=2,&\quad \mathbf{A}^{n^2-2n+2}=\mathbf{A}^2\ \text{is positive,}\\
\text{for }n=3,&\quad \mathbf{A}^{n^2-2n+2}=\mathbf{A}^5\ \text{is positive,}\\
\text{for }n=4,&\quad \mathbf{A}^{n^2-2n+2}=\mathbf{A}^{10}\ \text{is positive,}\\
&\quad \vdots
\end{align*}
which is a contradiction to the statement of this theorem that $\mathbf{A}^2,\mathbf{A}^3,\ldots,$ $ \mathbf{A}^{r-1}$ have at least one diagonal entry zero.

\item[Case 3] $1<\text{rank}(\mathbf{A})<r-1$\\
Let $\text{rank}(\mathbf{A})=k,$  where $1<k<r-1$. The number of eigenvalues of $\mathbf{A}$ would then be equal to the $\text{rank}(\mathbf{A})=k$. However, the assumed indecomposability of $\mathbf{A}$ implies that the number of eigenvalues of $\mathbf{A}$ is also equal to the period of $\mathbf{A}$. Hence, the period of $\mathbf{A}$ must be equal to $k$. This is however a contradiction because $\mathbf{A}^k$ has at least one diagonal element as zero due to the condition of the theorem, while $\mathbf{A}^k$ should be a positive matrix according to Theorem~\ref{Th:PerronFrobeniusTheormForNonNegative}.
\end{description}
Therefore, in each of the above jointly exhaustive cases, we have a contradiction to our assumption that $\mathbf{A}$ is indecomposable. This concludes the proof that $\mathbf{A}$ must be decomposable under the conditions stated in the theorem. 
\end{enumerate}
\end{proof}

\section{Structure of an $r$-Potent Matrix}
\label{Sec:StructureRPotents}
In this section, we study the structure of decomposable nonnegative $r$-potent matrices. Since a decomposable matrix can always be written in a block triangular form (by Defn.~\ref{Def:SimplifiedDefinitionOfDecomposability}) via  a permutation matrix, our focus will be on the properties of the diagonal blocks in such a block triangular form. However, before proceeding to our results on the properties of these diagonal blocks, we briefly digress to state a few comments (adopted from \cite{AlkaMarwaha}) on block triangularization that would be useful in proving our results in this section.

Let $\mathcal{S}$ be a semigroup of matrices in $\mathcal{M}_n(\mathbb{R})$ and $\mathcal{L}at'\mathcal{S}$ be the lattice of all standard subspaces which are invariant under every member of $\mathcal{S}$. It can be shown by simple induction that for any semigroup $\mathcal{S}$, $\mathcal{L}at'\mathcal{S}$ has a maximal chain of such subspaces. This chain may be non-trivial or trivial according to whether $\mathcal{S}$ has a non-trivial standard subspace or not. Each nontrivial chain in $\mathcal{L}at'\mathcal{S}$ gives rise to a block triangularization for $\mathcal{S}$ and since the members in the chain are standard subspaces, we shall call it a standard block triangularization. Evidently, to say that $\mathcal{S}$ has a standard block triangularization is equivalent to saying that there exists a permutation matrix $\mathbf{P}$ such that for each $\mathbf{S}\in\mathcal{S}$, $\mathbf{P}^{-1}\mathbf{S}\mathbf{P}$ has the upper block triangular form. Suppose $\mathcal{C}$ is a chain in $\mathcal{L}at'\mathcal{S}$ and $\mathcal{N}_1$ and $\mathcal{N}_2$ are two successive elements in $\mathcal{C}$ such that $\mathcal{N}_2\subseteq\mathcal{N}_1$, then $\mathcal{N}_1\ominus\mathcal{N}_2$ is called a gap in the chain. If $\mathbf{P}$ is the orthogonal projection onto $\mathcal{N}_1\ominus\mathcal{N}_2$, then the restriction of $\mathbf{P}\mathcal{S}\mathbf{P}$ to the range of $\mathbf{P}$ is called the compression of $\mathcal{S}$ to $\mathcal{N}_1\ominus\mathcal{N}_2$. Every such compression corresponds to a diagonal block in the block triangularization of $\mathbf{S}$. 

We now state our main result in this section:

\begin{theorem}
Any maximal standard block triangularisation of a decomposable nonnegative $r$-potent matrix has the following properties:
\begin{enumerate}
\item Each diagonal block is either zero or an indecomposable $r$-potent matrix of $\text{rank}\leq r-1$.
\item If $n$ is the number of non-zero diagonal blocks, then 
\begin{align}
\frac{k}{r-1}&\leq n\leq k,
\end{align}
where $k=\text{rank}(\mathbf{A})$.
\item Total number of diagonal blocks (including $\mathbf{0}$ blocks) lies between $\frac{k}{r-1}$ and $2k+1$.
\end{enumerate}
\end{theorem}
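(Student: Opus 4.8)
The plan is to exploit a maximal standard block triangularization $\mathbf{P}^{-1}\mathbf{A}\mathbf{P} = (\mathbf{A}_{ij})$ where $\mathbf{A}_{ij}=\mathbf{0}$ for $i>j$, and to analyze the diagonal blocks $\mathbf{A}_{11},\ldots,\mathbf{A}_{mm}$. First I would observe that raising a block upper triangular matrix to a power keeps it block upper triangular with diagonal blocks $\mathbf{A}_{ii}^s$; since $\mathbf{A}^r=\mathbf{A}$, comparing diagonal blocks gives $\mathbf{A}_{ii}^r=\mathbf{A}_{ii}$, so each diagonal block is itself $r$-potent. Maximality of the triangularization means no diagonal block admits a nontrivial standard invariant subspace, i.e.\ each $\mathbf{A}_{ii}$ is either indecomposable or a $1\times 1$ block; a $1\times 1$ $r$-potent block is either $0$ or $1$, and a $1\times 1$ block equal to $[1]$ is an indecomposable $r$-potent matrix of rank $1\le r-1$. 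For a nonzero indecomposable diagonal block of size $\ge 2$, part~(1) of Theorem~\ref{Th:MainResultSection3} (contrapositive) forces $\text{rank}\le r-1$. This establishes property~(1).

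For property~(2), the key identity is $\text{rank}(\mathbf{A})=\sum_{i=1}^m \text{rank}(\mathbf{A}_{ii})$, which follows from block triangularity (Lemma~\ref{LemmaRankEqualsTrace} gives $\text{rank}(\mathbf{A})=\text{trace}(\mathbf{A}^{r-1})=\sum_i \text{trace}(\mathbf{A}_{ii}^{r-1})=\sum_i \text{rank}(\mathbf{A}_{ii})$, using $r$-potency of each block and Lemma~\ref{LemmaRankEqualsTrace} again blockwise). Let $n$ be the number of nonzero diagonal blocks. Each nonzero block has rank $\ge 1$, so $n\le \sum_i \text{rank}(\mathbf{A}_{ii}) = k$. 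Each nonzero block, being $r$-potent of rank $\le r-1$ by property~(1), contributes at most $r-1$ to the sum, so $k=\sum \text{rank}(\mathbf{A}_{ii})\le n(r-1)$, giving $n\ge k/(r-1)$. This proves property~(2).

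For property~(3), the upper bound comes from a maximality/rearrangement argument mirroring the idempotent case in Section~\ref{Subsection:DecomposabilityOfNonNegativeIdempotentMatrices}: in a maximal standard block triangularization one may assume no two consecutive diagonal blocks are both zero (two adjacent zero blocks can be merged or the chain refined without loss), so between and around the $n$ nonzero blocks there are at most $n+1$ zero blocks, giving a total of at most $2n+1\le 2k+1$ diagonal blocks. The lower bound is immediate since the total number of diagonal blocks is at least $n\ge k/(r-1)$. I expect the main obstacle to be property~(3): carefully justifying that a \emph{maximal} standard block triangularization can be taken (or replaced by one) with no two consecutive zero diagonal blocks, and that merging adjacent zero blocks is compatible with the invariant-subspace-chain formulation of maximality; the idempotent argument in \cite{AlkaMarwaha} should transfer, but one must check that nothing about $r$-potency (as opposed to idempotency) breaks the merging step, and that the count $2n+1$ versus $2k+1$ is tight enough given $n\le k$.
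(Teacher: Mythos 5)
Your handling of properties (1) and (2) coincides with the paper's own proof: you obtain $\mathbf{A}_{ii}^r=\mathbf{A}_{ii}$ by comparing diagonal blocks of powers, you use maximality of the invariant chain to force each block to be indecomposable (the paper spells this out by noting that a standard invariant subspace $\mathcal{K}$ of a block corresponding to the gap $\mathcal{N}_1\ominus\mathcal{N}_2$ would make $\mathcal{N}_2\oplus\mathcal{K}$ an invariant standard subspace strictly inside the gap, contradicting maximality), you apply Theorem~\ref{Th:MainResultSection3} to cap the rank of each block at $r-1$, and you derive $k=\sum_i\text{rank}(\mathbf{A}_{ii})$ from Lemma~\ref{LemmaRankEqualsTrace} exactly as the paper does, giving $\frac{k}{r-1}\le n\le k$. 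No issues there.

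For property (3) there is a genuine gap, and it is precisely the one you flagged. The theorem is a statement about \emph{any} maximal standard block triangularization, so you are not entitled to ``merge adjacent zero blocks'' or to ``replace'' the triangularization by a nicer one: merging deletes a subspace from the chain, i.e.\ passes to a different (and no longer maximal) triangularization, and this manoeuvre only yields the existential statement from \cite{AlkaMarwaha} (there \emph{exists} a triangularization with no two consecutive zero diagonal blocks), not the universal one claimed here. What the paper actually does at this point is argue that in a maximal triangularization two consecutive zero diagonal blocks cannot occur, via an ingredient your sketch never produces: the compression of $\mathbf{A}$ to the union of the two corresponding gaps has the form $\left(\begin{smallmatrix}\mathbf{0}&\mathbf{B}\\ \mathbf{0}&\mathbf{0}\end{smallmatrix}\right)$, inherits $r$-potency from the block triangular structure, and, being nilpotent, must be zero. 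You should also be aware that even this step only gives $\mathbf{B}=\mathbf{0}$, and extracting a contradiction with maximality of the chain from that is delicate; indeed $\mathbf{A}=\mathrm{diag}(0,0,0,1)$ is a nonnegative idempotent (hence $r$-potent) of rank $k=1$ for which every standard subspace is invariant, so the full coordinate flag is a maximal chain whose triangularization has three consecutive $1\times 1$ zero diagonal blocks and $4>2k+1$ blocks in total. So your instinct that transferring the idempotent argument is the main obstacle is well founded, but as written your proposal neither supplies the paper's nilpotency observation nor resolves the tension between the ``any maximal'' formulation and the merely existential bound available from the idempotent theory.
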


\begin{proof} We shall prove the above three statements in order:
\begin{enumerate}
\item Let 
\begin{align}
\mathbf{P}^{-1}\mathbf{A}\mathbf{P}&=
\left(\begin{array}{cccc}
\mathbf{A}_{11}&\mathbf{*}&\cdots&\mathbf{*}\\
\mathbf{0}&\mathbf{A}_{22}&\cdots&\vdots\\
\vdots&\vdots&\ddots&\vdots\\
\mathbf{0}&\cdots&\cdots&\mathbf{A}_{nn} \end{array}
\right)
\end{align}
be a maximal standard block triangularization of $\mathbf{A}$ via a permutation matrix $\mathbf{P}$. As $\mathbf{A}^r=\mathbf{A}$, we have 
$(\mathbf{P}^{-1}\mathbf{A}\mathbf{P})^r=\mathbf{P}^{-1}\mathbf{A}^r\mathbf{P}=\mathbf{P}^{-1}\mathbf{A}\mathbf{P}$ 
so that $\mathbf{A}_{ii}^r=\mathbf{A}_{ii}$, for all $i=1,2,\ldots,n$.  Thus, each diagonal block is itself an $r$-potent matrix. Further, each such diagonal block $\mathbf{A}_{ii}$ would be indecomposable and would have $\text{rank}(\mathbf{A}_{ii})\leq r-1$. This can be seen as follows: Suppose some $\mathbf{A}_{jj}$ is decomposable with standard subspace $\mathcal{K}$. Now, $\mathbf{A}_{jj}$ corresponds to some gap, say $\mathcal{N}_1\ominus\mathcal{N}_2$, in the maximal chain of invariant subspaces for the aforesaid block triangularization of $\mathbf{A}$. Then, $\mathcal{N}_2\oplus\mathcal{K}$ is a standard subspace, invariant under $\mathbf{A}$ which lies strictly between $\mathcal{N}_1$ and $\mathcal{N}_2$, thus contradicting the maximality of the above triangularization. In other words, $\mathbf{A}_{jj}$ must be indecomposable. Finally, since all $r$-potent matrices of $\text{rank} > r-1$ are decomposable by Theorem~\ref{Th:MainResultSection3}, we get $\text{rank}(\mathbf{A}_{ii})\leq r-1,\forall i$. \\
 
\item We start by noticing that
\begin{align}
\text{trace}(\mathbf{A}^{r-1})&=\text{trace}(\mathbf{A}_{11}^{r-1})+\cdots+\text{trace}(\mathbf{A}_{nn}^{r-1}).
\end{align}
Therefore,
\begin{align}
k&=\text{rank}(\mathbf{A})\\
&=\text{trace}(\mathbf{A}^{r-1})\qquad\text{(from Lemma~\ref{LemmaRankEqualsTrace})}\\
&=\text{trace}(\mathbf{A}_{11}^{r-1})+\cdots+\text{trace}(\mathbf{A}_{nn}^{r-1})\\
&=\text{rank}(\mathbf{A}_{11})+\cdots+\text{rank}(\mathbf{A}_{nn})\qquad\text{(from Lemma~\ref{LemmaRankEqualsTrace})}\\
&\leq (r-1)+\cdots+(r-1)\\
&=n(r-1),
\end{align}
and therefore, we get the lower bound:
\begin{align}
n&\geq\frac{k}{r-1}.
\end{align}
On the other hand, let us consider an extreme case that each non-zero diagonal block has rank $1$. Then, the maximum number of non-zero diagonal blocks is $k$, so that we get the upper bound $n\leq k$. Combining the two bounds, we can write
\begin{align}
\frac{k}{r-1}\leq n\leq k.
\end{align}

\item We claim that two consecutive diagonal blocks cannot be zero. Suppose that two consecutive diagonal blocks are zero. Then, a $2\times 2$ block matrix 
\begin{align*}
\left(\begin{array}{cc} \mathbf{0}&\mathbf{B}\\ \mathbf{0}&\mathbf{0}\end{array}\right) 
\end{align*}
would be an $r$-potent if and only if it is zero. Therefore, the total number of diagonal blocks (including $\mathbf{0}$ blocks) lies between $\frac{k}{r-1}$ and $2k+1$.
\end{enumerate}
\end{proof}

\section{Decomposability of Kronecker Product of $r$-potent matrices}
\label{Sec:DecomposabilityKronRPotents}

In this section, we shall discuss the decomposability of kronecker products of $r$-potent matrices. We start by reiterating that the kronecker product of two $r$-potent matrices is itself an $r$-potent matrix because
\begin{align}
(\mathbf{A}\otimes\mathbf{B})^r&=\mathbf{A}^r\otimes\mathbf{B}^r\\
&=\mathbf{A}\otimes\mathbf{B}.
\end{align} 
We can now state the two main results in this section as follows:
\begin{theorem}
Let $\mathbf{A}$ be a nonnegative $r$-potent matrix of $\text{rank}>r-1$ and $\mathbf{B}$ be any non-zero nonnegative $r$-potent matrix. Then, $\mathbf{A}\otimes\mathbf{B}$ is a decomposable $r$-potent of $\text{rank}>r-1$.
\end{theorem}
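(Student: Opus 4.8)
The plan is to combine the rank computation for Kronecker products with the first part of Theorem~\ref{Th:MainResultSection3}. First I would verify that $\mathbf{A}\otimes\mathbf{B}$ is a nonnegative $r$-potent matrix: nonnegativity is immediate since the entries of $\mathbf{A}\otimes\mathbf{B}$ are products of entries of $\mathbf{A}$ and $\mathbf{B}$, each of which is nonnegative, and the $r$-potency is exactly the identity $(\mathbf{A}\otimes\mathbf{B})^r=\mathbf{A}^r\otimes\mathbf{B}^r=\mathbf{A}\otimes\mathbf{B}$ already recorded at the start of this section.

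Next I would pin down the rank. Using $\text{rank}(\mathbf{A}\otimes\mathbf{B})=\text{rank}(\mathbf{A})\cdot\text{rank}(\mathbf{B})$ from (\cite{Lutkepohl}, pp.20), and noting that $\mathbf{B}$ is non-zero so $\text{rank}(\mathbf{B})\geq 1$, we get
\begin{align*}
\text{rank}(\mathbf{A}\otimes\mathbf{B})=\text{rank}(\mathbf{A})\cdot\text{rank}(\mathbf{B})\geq\text{rank}(\mathbf{A})>r-1.
\end{align*}
So $\mathbf{A}\otimes\mathbf{B}$ is a nonnegative $r$-potent matrix of rank strictly greater than $r-1$.

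Finally, I would invoke part~(1) of Theorem~\ref{Th:MainResultSection3}, which asserts that every nonnegative $r$-potent matrix of rank greater than $r-1$ is decomposable. Applying this to $\mathbf{A}\otimes\mathbf{B}$ gives decomposability, and the rank bound above gives the remaining claim that its rank exceeds $r-1$. This completes the argument.

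I do not anticipate a serious obstacle here: the result is essentially a bookkeeping consequence of the multiplicativity of rank under Kronecker product together with the already-established Theorem~\ref{Th:MainResultSection3}. The only point requiring a moment's care is making sure the hypothesis ``$\mathbf{B}$ non-zero'' is actually used --- it is precisely what guarantees $\text{rank}(\mathbf{B})\geq 1$ and hence that the rank does not collapse; if $\mathbf{B}=\mathbf{0}$ the product is zero and the statement would fail. One might also remark that the roles of $\mathbf{A}$ and $\mathbf{B}$ are not symmetric in the hypothesis, so no appeal to commutativity of $\otimes$ is needed.
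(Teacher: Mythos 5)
Your proposal is correct and follows essentially the same route as the paper: multiplicativity of rank under the Kronecker product combined with part (1) of Theorem~\ref{Th:MainResultSection3}, with the observation that $\mathbf{B}\neq\mathbf{0}$ forces $\text{rank}(\mathbf{B})\geq 1$. Your explicit remark about where the non-vanishing of $\mathbf{B}$ enters is a slight improvement in clarity over the paper's version, but the argument is the same.
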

\begin{proof}
Since
\begin{align}
\text{rank}(\mathbf{A}\otimes\mathbf{B})&=\text{rank}(\mathbf{A})\text{rank}(\mathbf{B})\\
&>(r-1)\text{rank}(\mathbf{B}),
\end{align}
which implies that
\begin{align}
\text{rank}(\mathbf{A}\otimes\mathbf{B})&> r-1.
\end{align}
Therefore, from Theorem~\ref{Th:MainResultSection3} of this paper, $\mathbf{A}\otimes\mathbf{B}$ is a decomposable $r$-potent matrix.
\end{proof}

\begin{theorem}
Let $\mathbf{A}$ be a nonnegative $r$-potent matrix of $\text{rank}>r-1$ and $\mathbf{B}$ be a non-zero nonnegative  idempotent matrix. Then, $\mathbf{A}\otimes\mathbf{B}$ is a decomposable $r$-potent matrix.
\end{theorem}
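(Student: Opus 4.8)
The plan is to reduce this statement to the preceding theorem (equivalently, to part~(1) of Theorem~\ref{Th:MainResultSection3}) by observing that a nonnegative idempotent matrix is, in particular, a nonnegative $r$-potent matrix for the \emph{same} value of $r$. Indeed, $\mathbf{B}^2=\mathbf{B}$ forces $\mathbf{B}^m=\mathbf{B}$ for every natural number $m\geq 1$, so in particular $\mathbf{B}^r=\mathbf{B}$, and $\mathbf{B}$ is nonnegative by hypothesis. Thus $\mathbf{B}$ falls under the hypotheses of the previous theorem, and the whole statement becomes a specialization of it.

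First I would record that $\mathbf{A}\otimes\mathbf{B}$ is $r$-potent, using multiplicativity of the Kronecker product with respect to powers: $(\mathbf{A}\otimes\mathbf{B})^r=\mathbf{A}^r\otimes\mathbf{B}^r=\mathbf{A}\otimes\mathbf{B}$. Next I would compute its rank via $\text{rank}(\mathbf{A}\otimes\mathbf{B})=\text{rank}(\mathbf{A})\,\text{rank}(\mathbf{B})$. Since $\mathbf{B}\neq\mathbf{0}$, we have $\text{rank}(\mathbf{B})\geq 1$, whence $\text{rank}(\mathbf{A}\otimes\mathbf{B})\geq\text{rank}(\mathbf{A})>r-1$. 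Finally, applying part~(1) of Theorem~\ref{Th:MainResultSection3} to the nonnegative $r$-potent matrix $\mathbf{A}\otimes\mathbf{B}$, whose rank exceeds $r-1$, yields that it is decomposable, which completes the argument; as a byproduct one also sees that $\text{rank}(\mathbf{A}\otimes\mathbf{B})>r-1$.

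The \emph{main obstacle} here is essentially cosmetic: the only point requiring care is the opening observation that idempotency yields $r$-potency for the matching exponent $r$; once that is in place the statement is a direct consequence of the preceding theorem, and no new estimate or structural argument is needed. If one preferred a self-contained proof not citing the previous results, one would simply re-run the Perron--Frobenius contradiction of Theorem~\ref{Th:MainResultSection3}(1) verbatim for $\mathbf{A}\otimes\mathbf{B}$: assuming indecomposability would force all $(r-1)$-th roots of unity to be the only eigenvalues and each simple, hence $\text{rank}(\mathbf{A}\otimes\mathbf{B})=r-1$, contradicting the rank computation above.
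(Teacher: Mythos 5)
Your proposal is correct and follows essentially the same route as the paper: note that an idempotent $\mathbf{B}$ satisfies $\mathbf{B}^r=\mathbf{B}$, conclude $(\mathbf{A}\otimes\mathbf{B})^r=\mathbf{A}^r\otimes\mathbf{B}^r=\mathbf{A}\otimes\mathbf{B}$, use $\text{rank}(\mathbf{A}\otimes\mathbf{B})=\text{rank}(\mathbf{A})\,\text{rank}(\mathbf{B})>r-1$, and invoke Theorem~\ref{Th:MainResultSection3}(1). Your framing of the result as a specialization of the preceding Kronecker-product theorem is a harmless (and accurate) shortcut; no gap here.
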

\begin{proof}
Since an idempotent matrix is, by definition, also an $r$-potent matrix, we have
\begin{align}
(\mathbf{A}\otimes\mathbf{B})^r&=\mathbf{A}^r\otimes\mathbf{B}^r=\mathbf{A}\otimes\mathbf{B},
\end{align}
and therefore, $\mathbf{A}\otimes\mathbf{B}$ is also an $r$-potent matrix. In addition,  
\begin{align}
\text{rank}(\mathbf{A}\otimes\mathbf{B})&=\text{rank}(\mathbf{A})\text{rank}(\mathbf{B})\\
&>(r-1)\text{rank}(\mathbf{B}),
\end{align}
which implies that
\begin{align}
\text{rank}(\mathbf{A}\otimes\mathbf{B})&> r-1,
\end{align}
and hence, $\mathbf{A}\otimes\mathbf{B}$ is decomposable by Theorem~\ref{Th:MainResultSection3} of this paper. 
\end{proof}

\section{Decomposibility of Semi-Groups of $r$-Potent Matrices}
\label{Sec:DecomposabilitySemiGroupsRPotents}

Let us first analyse the decomposability of a semigroup generated by a single $r$-potent matrix. 
\begin{theorem}
Let $\mathbf{A}$ be a nonnegative $r$-potent matrix. Consider the semigroup $\mathcal{S}=\{\mathbf{A},\mathbf{A}^2,\cdots,\mathbf{A}^{r-1}\}$ generated by $\mathbf{A}$.
\begin{enumerate}
\item If $\text{rank}(\mathbf{A}) > r-1$, then $\mathcal{S}$ is decomposable. 
\item If $\text{rank}(\mathbf{A})\leq r-1$, $\mathbf{A}$ is singular and $\mathbf{A}^2,\mathbf{A}^3,\cdots,\mathbf{A}^{r-1}$ have a zero diagonal entry, then $\mathcal{S}$ is decomposable.
\end{enumerate}
\end{theorem}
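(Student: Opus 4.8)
The plan is to reduce the statement about the semigroup $\mathcal{S}$ to the already-proved Theorem~\ref{Th:MainResultSection3} about the single matrix $\mathbf{A}$. The key observation is that a standard subspace invariant under $\mathbf{A}$ is automatically invariant under every positive power of $\mathbf{A}$: if $\mathcal{M}$ is a standard subspace with $\mathbf{A}(\mathcal{M})\subseteq\mathcal{M}$, then $\mathbf{A}^k(\mathcal{M})\subseteq\mathbf{A}^{k-1}(\mathcal{M})\subseteq\cdots\subseteq\mathbf{A}(\mathcal{M})\subseteq\mathcal{M}$ for every $k\geq 1$. Consequently $\mathcal{M}$ is invariant under every member of $\mathcal{S}=\{\mathbf{A},\mathbf{A}^2,\ldots,\mathbf{A}^{r-1}\}$, so a single permutation matrix $\mathbf{P}$ bringing $\mathbf{A}$ to block upper triangular form simultaneously brings each $\mathbf{A}^k$ to block upper triangular form (indeed $\mathbf{P}^{-1}\mathbf{A}^k\mathbf{P}=(\mathbf{P}^{-1}\mathbf{A}\mathbf{P})^k$ inherits the block structure). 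Hence $\mathcal{S}$ is decomposable if and only if $\mathbf{A}$ itself is decomposable.

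With this reduction in hand, the two cases follow immediately. For part (1), the hypothesis $\text{rank}(\mathbf{A})>r-1$ guarantees, by part (1) of Theorem~\ref{Th:MainResultSection3}, that $\mathbf{A}$ is decomposable, and therefore so is $\mathcal{S}$. For part (2), the hypotheses that $\text{rank}(\mathbf{A})\leq r-1$, that $\mathbf{A}$ is singular, and that $\mathbf{A}^2,\mathbf{A}^3,\ldots,\mathbf{A}^{r-1}$ each have a zero diagonal entry are precisely the hypotheses of part (2) of Theorem~\ref{Th:MainResultSection3}, which again yields that $\mathbf{A}$ is decomposable and hence so is $\mathcal{S}$.

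I expect no serious obstacle: the content of the theorem is essentially that decomposability of a singly generated semigroup is governed by decomposability of its generator, which is the elementary invariance-propagation remark above. The only point requiring a modicum of care is to phrase everything in terms of standard subspaces (equivalently, a common permutation matrix), so that the notion of decomposability used for the semigroup matches Definition~\ref{Def:SimplifiedDefinitionOfDecomposability} and the discussion of standard block triangularizations in Section~\ref{Sec:StructureRPotents}; once that bookkeeping is in place, the proof is a one-line appeal to Theorem~\ref{Th:MainResultSection3}.
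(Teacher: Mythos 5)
Your proposal is correct and follows essentially the same route as the paper: both reduce the statement to Theorem~\ref{Th:MainResultSection3} for the single matrix $\mathbf{A}$ and then observe that the same permutation matrix works for every power, since $\mathbf{P}^{-1}\mathbf{A}^k\mathbf{P}=(\mathbf{P}^{-1}\mathbf{A}\mathbf{P})^k$ inherits the block upper triangular form. Your invariant-subspace phrasing is just an equivalent way of stating the paper's computation.
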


\begin{proof}
We shall prove the two parts seperately.
\begin{enumerate}
\item \label{SimpleSemigroupFiniteEasyCase}
Since $\text{rank}(\mathbf{A})>r-1$, $\mathbf{A}$ is decomposable by Theorem~\ref{Th:MainResultSection3}. Let
\begin{align}
\mathbf{P}^{-1}\mathbf{A}\mathbf{P}&=
\left(\begin{array}{cc}
\mathbf{B}&\mathbf{C}\\
\mathbf{0}&\mathbf{D}
\end{array}\right)
\end{align}
be a block-triangular decomposition of $\mathbf{A}$. Then
\begin{align}
(\mathbf{P}^{-1}\mathbf{A}\mathbf{P})^k&=\mathbf{P}^{-1}\mathbf{A}^k\mathbf{P}=
\left(\begin{array}{cc}
\mathbf{B}^k&*\\
\mathbf{0}&\mathbf{D}^k
\end{array}\right)
\end{align}
for all $k=2,3,\ldots,r-1$. Therefore, $\mathcal{S}$ is decomposable via permutation matrix $\mathbf{P}$.

\item Under the stated conditions, $\mathbf{A}$ is decomposable. Decomposability of $\mathcal{S}$ follows from an argument similar to (\ref{SimpleSemigroupFiniteEasyCase}) above. 
\end{enumerate}
\end{proof}

The above theorem motivates us to study decomposability of a general semigroup of $r$-potent matrices. To this end, we require the following equivalent conditions for decomposability of nonnegative semigroups in $\mathcal{M}_n(\mathbb{R})$.

\begin{lemma} \cite{AlkaMarwaha}
\label{LemmaOnEquivalenceOfSemigroupDecomposability}
For a semigroup $\mathcal{S}$ in $\mathcal{M}_n(\mathbb{R})$ with nonnegative matrices, the following are equivalent:
\begin{enumerate}
\item $\mathcal{S}$ is decomposable.
\item There exists a non-zero nonnegative functional on $\mathcal{M}_n(\mathbb{R})$ whose restriction to $\mathcal{S}$ is zero.
\item $\mathcal{S}$ has a common zero entry.
\item $\mathcal{S}$ has a common non-diagonal zero entry.
\item There exist $\mathbf{A},\mathbf{B}\in\mathcal{M}_n(\mathbb{R})$, both non-zero and nonnegative such that $\mathbf{A}\mathcal{S}\mathbf{B}=\{0\}$.
\end{enumerate}
\end{lemma}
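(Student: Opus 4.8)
The plan is to prove the five statements equivalent by establishing the cycle $(4)\Rightarrow(3)\Rightarrow(1)\Rightarrow(4)$ together with the side equivalences $(2)\Leftrightarrow(3)$ and $(3)\Leftrightarrow(5)$. The one structural fact used everywhere is that, since all members of $\mathcal{S}$ are nonnegative, an entry of any sum or product of members of $\mathcal{S}$ is zero if and only if each of the nonnegative terms building it up is zero; this lets positivity patterns be tracked by a purely combinatorial ``reachability'' relation on the index set.

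The side equivalences and the easy implications are routine. A linear functional on $\mathcal{M}_n(\mathbb{R})$ has the form $\phi(\mathbf{X})=\sum_{i,j}c_{ij}x_{ij}$, and it is nonnegative and non-zero precisely when all $c_{ij}\ge 0$ and some $c_{pq}>0$; applying $\phi$ to the members of $\mathcal{S}$ and using nonnegativity shows that $\phi|_{\mathcal{S}}=0$ forces $s_{pq}=0$ for every $\mathbf{S}\in\mathcal{S}$, which is exactly a common zero entry, while conversely a common zero entry at $(p,q)$ makes the coordinate functional $\mathbf{X}\mapsto x_{pq}$ work; this gives $(2)\Leftrightarrow(3)$. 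For $(3)\Leftrightarrow(5)$: a common zero entry at $(p,q)$ yields $\mathbf{A}\mathbf{S}\mathbf{B}=s_{pq}\,\mathbf{e}_p\mathbf{e}_q^{\mathrm{T}}=\mathbf{0}$ with the nonzero nonnegative choices $\mathbf{A}=\mathbf{e}_p\mathbf{e}_p^{\mathrm{T}}$, $\mathbf{B}=\mathbf{e}_q\mathbf{e}_q^{\mathrm{T}}$, and conversely, if $\mathbf{A}\mathcal{S}\mathbf{B}=\{\mathbf{0}\}$ with $\mathbf{A},\mathbf{B}$ nonzero and nonnegative, choosing $p,i,j,q$ with $a_{pi}>0$ and $b_{jq}>0$ gives $0=(\mathbf{A}\mathbf{S}\mathbf{B})_{pq}\ge a_{pi}s_{ij}b_{jq}$, so $(i,j)$ is a common zero entry. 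Finally $(4)\Rightarrow(3)$ is trivial, and $(1)\Rightarrow(4)$ holds because decomposability produces a single permutation matrix $\mathbf{P}$ with $\mathbf{P}^{-1}\mathbf{S}\mathbf{P}=\bigl(\begin{smallmatrix}\mathbf{B}_{\mathbf{S}}&\mathbf{C}_{\mathbf{S}}\\ \mathbf{0}&\mathbf{D}_{\mathbf{S}}\end{smallmatrix}\bigr)$ for all $\mathbf{S}\in\mathcal{S}$ with square diagonal blocks of fixed sizes; the non-empty strictly lower-left zero block consists of off-diagonal positions, and undoing the relabelling by $\mathbf{P}$ gives $\mathcal{S}$ a common non-diagonal zero entry.

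The core of the argument, and the place I expect the real work, is $(3)\Rightarrow(1)$: a common zero entry of a \emph{semigroup} must force a non-trivial standard invariant subspace --- note this fails for a single matrix, since e.g. $\{\mathbf{I},\bigl(\begin{smallmatrix}0&1\\1&0\end{smallmatrix}\bigr)\}$ is indecomposable yet each member has a zero entry, it simply has no \emph{common} one. Assume $n\ge 2$ (the case $n=1$ being degenerate) and prove the contrapositive. Declare $j\to i$ when $s_{ij}>0$ for some $\mathbf{S}\in\mathcal{S}$. Because $(\mathbf{S}\mathbf{T})_{ik}\ge s_{ij}t_{jk}$ and $\mathbf{S}\mathbf{T}\in\mathcal{S}$, the relation $\to$ is transitive; hence for every index $v$ the set $D(v)=\{v\}\cup\{i:\ v\to i\}$ is closed under $\to$, and one checks that the $\to$-closed index sets are exactly those spanning standard $\mathcal{S}$-invariant subspaces, with $D(v)$ the smallest one containing $v$. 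If $\mathcal{S}$ is indecomposable, each $D(v)$ must be all of $\{1,\dots,n\}$, which says precisely that for every ordered pair $i\ne j$ some member of $\mathcal{S}$ has $s_{ij}>0$; consequently every common zero entry lies on the diagonal, say $s_{aa}=0$ for all $\mathbf{S}\in\mathcal{S}$.

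To finish, pick any $b\ne a$ and members $\mathbf{S},\mathbf{T}\in\mathcal{S}$ with $s_{ab}>0$ and $t_{ba}>0$ (available by the connectivity just derived). Then $(\mathbf{S}\mathbf{T})_{aa}\ge s_{ab}t_{ba}>0$ while $\mathbf{S}\mathbf{T}\in\mathcal{S}$, contradicting $s_{aa}=0$ for all members of $\mathcal{S}$. Hence an indecomposable semigroup has no common zero entry, which is the contrapositive of $(3)\Rightarrow(1)$, and the cycle closes. The only delicate point is the bookkeeping in the previous paragraph --- verifying that $D(v)$ is the minimal $\to$-closed set containing $v$ and that $\to$-closed sets correspond precisely to standard invariant subspaces --- together with remembering to invoke closure of $\mathcal{S}$ under products each time a new positive entry is produced; the rest is elementary positivity arithmetic.
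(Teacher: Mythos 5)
Your proof is correct. Note that the paper itself offers no proof of this lemma --- it is imported verbatim from the cited reference of Marwaha and Radjavi --- so there is nothing in the text to compare against; your argument stands on its own and is essentially the standard one found in that literature (and in Radjavi--Rosenthal). The easy equivalences $(2)\Leftrightarrow(3)$, $(3)\Leftrightarrow(5)$, $(4)\Rightarrow(3)$ and $(1)\Rightarrow(4)$ are handled cleanly, and the key step $(3)\Rightarrow(1)$ is done correctly: the relation $j\to i$ is transitive precisely because $\mathcal{S}$ is closed under products, the $\to$-closed index sets are exactly the standard invariant subspaces, and indecomposability then forces every off-diagonal position to be positive for some member, after which the product trick $(\mathbf{S}\mathbf{T})_{aa}\ge s_{ab}t_{ba}>0$ kills any putative common diagonal zero. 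Two minor remarks: your aside that the equivalence can only fail for a single matrix rather than a semigroup is exactly the right intuition for why semigroup closure is indispensable here; and the degenerate cases you set aside ($n=1$, or $\mathcal{S}=\{\mathbf{0}\}$ with $n=1$) are genuinely where the statement needs the usual implicit conventions, so flagging them rather than ignoring them is appropriate.
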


We now proceed to the two main results of this section and detail their respective proofs:
\begin{theorem}
\label{FiniteDimensionDecomposabilityOfSimpleSemigroup}
Let $\mathcal{S}$ be a semigroup of nonnegative $r$-potent matrices of $\text{rank}> r-1$. Then, $\mathcal{S}$ is decomposable. 
\end{theorem}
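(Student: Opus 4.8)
The plan is to reduce the semigroup statement to the single-matrix result (Theorem~\ref{Th:MainResultSection3}) by way of the characterization of decomposability in Lemma~\ref{LemmaOnEquivalenceOfSemigroupDecomposability}. Concretely, I would argue by contradiction: suppose $\mathcal{S}$ is a semigroup of nonnegative $r$-potent matrices, each of $\text{rank} > r-1$, and suppose $\mathcal{S}$ is \emph{indecomposable}. By Lemma~\ref{LemmaOnEquivalenceOfSemigroupDecomposability}, indecomposability of $\mathcal{S}$ means $\mathcal{S}$ has no common zero entry; in particular there is no nonzero nonnegative functional vanishing on $\mathcal{S}$. The goal is to manufacture such a functional, or equivalently a common zero entry, from the structural information we have about individual members.

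First I would pick any $\mathbf{A} \in \mathcal{S}$. Since $\text{rank}(\mathbf{A}) > r-1$, Theorem~\ref{Th:MainResultSection3}(1) says $\mathbf{A}$ is decomposable as a single matrix, so after a fixed permutation $\mathbf{A}$ has a zero block below the diagonal; in particular $\mathbf{A}$ has at least one (off-diagonal) zero entry, say in position $(i,j)$. The difficulty, of course, is that different members of $\mathcal{S}$ may be decomposed by different permutations, so their zero entries need not line up. The standard device (this is exactly the flavour of the triangularization arguments in Section~\ref{Sec:StructureRPotents} and the results quoted from \cite{AlkaMarwaha}) is to pass to a maximal standard block triangularization of the whole semigroup $\mathcal{S}$ via a \emph{common} permutation $\mathbf{P}$: let $\mathcal{L}at'\mathcal{S}$ be the lattice of standard invariant subspaces and take a maximal chain. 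If this chain is nontrivial we are done, because then $\mathbf{P}^{-1}\mathbf{S}\mathbf{P}$ is properly block upper-triangular for every $\mathbf{S}\in\mathcal{S}$, giving a common non-diagonal zero entry, hence $\mathcal{S}$ is decomposable.

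So the real work is to rule out the case that the maximal chain is trivial, i.e.\ that $\mathcal{S}$ has no nontrivial standard invariant subspace. In that case each diagonal block of the triangularization is the whole space, so the compression of $\mathcal{S}$ to the single ``gap'' is $\mathcal{S}$ itself, and indecomposability of $\mathcal{S}$ as a semigroup forces, in particular, that no single member can have a standard invariant subspace — but any $\mathbf{A}\in\mathcal{S}$ with $\text{rank}(\mathbf{A})>r-1$ \emph{does} have one by Theorem~\ref{Th:MainResultSection3}(1), since a decomposable matrix has a nontrivial standard invariant subspace by Definition~\ref{Def:SimplifiedDefinitionOfDecomposability}. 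That invariant subspace of $\mathbf{A}$ need not be invariant under all of $\mathcal{S}$, so to finish cleanly I would instead use the functional formulation: Theorem~\ref{Th:MainResultSection3}(1) applied to $\mathbf{A}$ produces a nonzero nonnegative functional $\varphi_{\mathbf{A}}$ (the coordinate functional at the off-diagonal zero position of $\mathbf{A}$) vanishing at $\mathbf{A}$; one then applies the semigroup machinery of Lemma~\ref{LemmaOnEquivalenceOfSemigroupDecomposability} together with Lemma~\ref{RepeatedlyUsedLemma} — noting that for any $\mathbf{S}\in\mathcal{S}$, every power $\mathbf{S}^m$ is again a rank-$>\,r-1$ $r$-potent member of $\mathcal{S}$ (by $r$-potency the powers cycle through $\mathbf{S},\mathbf{S}^2,\dots,\mathbf{S}^{r-1}$), so every such matrix has a zero diagonal entry as well, forcing a common zero entry across $\mathcal{S}$. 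I expect the delicate point to be exactly this transfer from ``each member individually has a zero entry'' to ``the members share a zero entry,'' which is why routing everything through the common maximal triangularization of $\mathcal{S}$ and the equivalences in Lemma~\ref{LemmaOnEquivalenceOfSemigroupDecomposability} is the safest route; once the triangularization is nontrivial the conclusion is immediate, and the nontriviality is guaranteed because a trivial chain would make $\mathcal{S}$ (and hence each of its rank-$>\,r-1$ members) have no standard invariant subspace, contradicting Theorem~\ref{Th:MainResultSection3}(1).
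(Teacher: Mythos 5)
There is a genuine gap, and it is exactly the one you flag as ``delicate'' and then never close: passing from ``each member of $\mathcal{S}$ individually has a zero entry (is individually decomposable)'' to ``the members of $\mathcal{S}$ share a zero entry (have a common standard invariant subspace).'' Your closing argument is circular. Triviality of the maximal chain in $\mathcal{L}at'\mathcal{S}$ means only that no nontrivial standard subspace is invariant under \emph{every} member of $\mathcal{S}$; it does not mean that any individual member lacks a standard invariant subspace of its own. So the sentence ``a trivial chain would make $\mathcal{S}$ (and hence each of its rank-$>r-1$ members) have no standard invariant subspace, contradicting Theorem~\ref{Th:MainResultSection3}(1)'' is a non sequitur --- you yourself noted two sentences earlier that the invariant subspace of a single $\mathbf{A}$ need not be invariant under all of $\mathcal{S}$, and that observation defeats the conclusion you then draw. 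Likewise the intermediate claim that every member has a zero \emph{diagonal} entry is not delivered by Theorem~\ref{Th:MainResultSection3}(1) (decomposability only gives an off-diagonal zero block after some member-dependent permutation), and even if it were, Lemma~\ref{RepeatedlyUsedLemma} and Lemma~\ref{LemmaOnEquivalenceOfSemigroupDecomposability} do not by themselves align member-by-member zeros into a \emph{common} zero entry. As it stands, your argument proves nothing beyond what Theorem~\ref{Th:MainResultSection3} already says about single matrices.

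The paper closes this gap with a different and essential idea, in the spirit of the band arguments of Marwaha--Radjavi: since $\mathbf{A}^{r-1}$ is idempotent for every $\mathbf{A}\in\mathcal{S}$, the semigroup contains idempotents; choose an idempotent $\mathbf{P}\in\mathcal{S}$ of minimal rank. Because $\operatorname{rank}(\mathbf{P})>r-1>1$, $\mathbf{P}$ is decomposable, so after a permutation it has block form with nonzero diagonal blocks $\mathbf{P}_1,\mathbf{P}_2$. For an arbitrary $\mathbf{S}\in\mathcal{S}$, the idempotent $(\mathbf{P}\mathbf{S}\mathbf{P})^{r-1}$ lies in $\mathcal{S}$, has range inside $R(\mathbf{P})$ and null space containing $N(\mathbf{P})$, and minimality of $\operatorname{rank}(\mathbf{P})$ forces $(\mathbf{P}\mathbf{S}\mathbf{P})^{r-1}=\mathbf{P}$ and then $\mathbf{P}\mathbf{S}\mathbf{P}=\mathbf{P}$. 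Reading off the $(2,1)$ block gives $\mathbf{P}_2\mathbf{S}_{21}\mathbf{P}_1=\mathbf{0}$ for every $\mathbf{S}$, and nonnegativity then pins down a \emph{single} position $(j,k)$ at which every member of $\mathcal{S}$ vanishes; Lemma~\ref{LemmaOnEquivalenceOfSemigroupDecomposability} finishes. The minimal idempotent is precisely the device that makes the zero entries of different members line up, and some mechanism of this kind (or an equivalent one) is what your proposal is missing.
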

\begin{proof}
We start by noticing that a semigroup of $r$-potent matrices always contains an idempotent matrix. For example, if $\mathbf{A}$ is an $r$-potent matrix in $\mathcal{S}$, then $\mathbf{A}^{r-1}$ is an idempotent matrix in $\mathcal{S}$ (see Eqn.~\ref{Eq:IdempotencyOfRMinus1thPower}). Consider now a minimal rank idempotent matrix $\mathbf{P}$ in $\mathcal{S}$. We can always choose such a minimal rank idempotent matrix because if we choose any minimal rank matrix $\mathbf{B}$ in $\mathcal{S}$, then the rank of the corresponding idempotent matrix $\mathbf{B}^{r-1}$ is upperbounded by $\text{rank}(\mathbf{B})$ because (\cite{Lutkepohl}, pp.61) 
\begin{align}
\text{rank}(\mathbf{B}^{r-1})
&\leq \text{min}\{\text{rank}(\mathbf{B}^{r-2}),\text{rank}(\mathbf{B})\}\\
&\leq\text{rank}(\mathbf{B}).
\end{align} 
Now, since $\text{rank}(\mathbf{P})>r-1>1$, $\mathbf{P}$ must be decomposable. Let $\mathbf{P}$ have the form
\begin{align*}
\left(\begin{array}{cc}
\mathbf{P}_1&\mathbf{K}\\
\mathbf{0}&\mathbf{P}_2
\end{array}
\right)
\end{align*}
with respect to some permutation of basis where both $\mathbf{P}_1$ and $\mathbf{P}_2$ are non-zero.

Let us now consider an arbitrary $\mathbf{S}\in\mathcal{S}$. Then, $(\mathbf{P}\mathbf{S}\mathbf{P})^{r-1}$ will be an idempotent matrix, also in $\mathcal{S}$. Further, the range of  $(\mathbf{P}\mathbf{S}\mathbf{P})^{r-1}$ is contained in the range of $\mathbf{P}$ and the null space of $(\mathbf{P}\mathbf{S}\mathbf{P})^{r-1}$ contains the null space of $\mathbf{P}$. Therefore, 
\begin{align}
\text{rank}((\mathbf{P}\mathbf{S}\mathbf{P})^{r-1})\leq\text{rank}(\mathbf{P}).
\end{align}
Considering that $\mathbf{P}$ is minimal rank in $\mathcal{S}$, we get
\begin{align}
\text{rank}((\mathbf{P}\mathbf{S}\mathbf{P})^{r-1})=\text{rank}(\mathbf{P}).
\end{align}
 Using rank-nullity theorem of linear algebra (\cite{CarlMeyer}, pp.199), we have nullity of $(\mathbf{P}\mathbf{S}\mathbf{P})^{r-1}$ is equal to the nullity of $\mathbf{P}$, which, in turn, yields
\begin{align}
 (\mathbf{P}\mathbf{S}\mathbf{P})^{r-1}&=\mathbf{P}.
\end{align}
This, however,  implies that
\begin{align}
\mathbf{P}\mathbf{S}\mathbf{P}&=\mathbf{P}^{\frac{1}{r-1}}=\mathbf{P}
\end{align}
 due to the following lemma:

\begin{lemma}
In the given semigroup $\mathcal{S}$, the only nonnegative $(r-1)$-th root of minimal rank idempotent $\mathbf{P}$ is $\mathbf{P}$ itself. 
\end{lemma}
\begin{proof}
Since 
\begin{align}
\label{Eq:PisIdempotent}
\mathbf{P}^2&=\mathbf{P}\\
\Rightarrow \mathbf{P}^{r-1}&=\mathbf{P},
\end{align}
$\mathbf{P}$ is itself a nonnegative $(r-1)$-th root of $\mathbf{P}$ in $\mathcal{S}$. Let $\mathbf{P}'$ be another nonnegative $(r-1)$-th root of $\mathbf{P}$ in $\mathcal{S}$, that is, 
\begin{align}
\label{Eq:PdashPowerR-1EqualP}
(\mathbf{P}')^{r-1}&=\mathbf{P}.
\end{align}
As $\mathbf{P}'$ belongs to $\mathcal{S}$, we also have 
\begin{align}
\label{Eq:RPotencyOfPdash}
(\mathbf{P}')^r&=\mathbf{P}'.
\end{align} 
It follows from  Eqn.~\ref{Eq:PdashPowerR-1EqualP} and Eqn.~\ref{Eq:RPotencyOfPdash} that
\begin{align}
\mathbf{P}\mathbf{P}'=\mathbf{P}'\mathbf{P}&=\mathbf{P}'.
\end{align}
Using the fact that $\mathbf{P}$ is of minimal rank in $\mathcal{S}$, it follows from rank-nullity theorem that $R(\mathbf{P})=R(\mathbf{P'})$ and $N(\mathbf{P})=N(\mathbf{P'})$. Moreover, Eqn.~\ref{Eq:PisIdempotent} implies that \begin{align}
\mathbf{P} &=\mathbf{I}\qquad \text{on}\quad R(\mathbf{P}).
\end{align}
which, due to Eqn.~\ref{Eq:PdashPowerR-1EqualP}, implies that 
\begin{align}
(\mathbf{P'})^{r-1}=\mathbf{I}\qquad \text{on}\quad R(\mathbf{P})
\end{align}
where $\mathbf{I}$ is the identity operator. In other words, $\mathbf{P'}$ is a nonnegative $(r-1)$-th root of $\mathbf{I}$ on $R(\mathbf{P})$. However, the only nonnegative $(r-1)$-th root of the identity operator is the identity operator itself. Therefore
\begin{align}
\mathbf{P'}=\mathbf{I}=\mathbf{P} \qquad\text{on}\quad R(\mathbf{P})=R(\mathbf{P'})
\end{align} 
and
\begin{align}
\mathbf{P'}=\mathbf{P}=\mathbf{0}\qquad \text{on}\quad N(\mathbf{P})=N(\mathbf{P'}).
\end{align}
Therefore
\begin{align}
\mathbf{P'}&=\mathbf{P}.
\end{align}
\end{proof}

Finally, let 
\begin{align*}
\left(\begin{array}{cc}
\mathbf{S}_{11}&\mathbf{S}_{12}\\
\mathbf{S}_{21}&\mathbf{S}_{22}
\end{array}\right)
\end{align*}
be the representation of an arbitrary $\mathbf{S}\in\mathcal{S}$ with respect to this permuted basis. Then, $\mathbf{P}\mathbf{S}\mathbf{P}=\mathbf{P}$ implies that $\mathbf{P}_2\mathbf{S}_{21}\mathbf{P}_1=\mathbf{0}$. If $p_{ij}$ and $r_{kl}$ are non-zero entries in $\mathbf{P}_2$ and $\mathbf{P}_1$, respectively, then it is easy to see that the $(j,k)$-th entry in each $\mathbf{S}\in\mathcal{S}$ is zero. This makes use of the fact that $\mathbf{P}_2,\mathbf{S}_{21}$, and $\mathbf{P}_1$ are all nonnegative matrices. By Lemma~\ref{LemmaOnEquivalenceOfSemigroupDecomposability}, $\mathcal{S}$ is decomposable.
\end{proof}

\begin{theorem}
Let $\mathcal{S}$ be a semigroup of nonnegative $r$-potent matrices of $\text{rank}>r-1$. Then, any maximal standard block triangularization of $\mathcal{S}$ has the property that each non-zero diagonal block is a semigroup of nonnegative $r$-potent matrices with at least one element of $\text{rank}\leq r-1$. 
\end{theorem}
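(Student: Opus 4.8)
```latex
\textbf{Proof proposal.} The plan is to combine the maximality of the chain with the structural facts already established for a single decomposable $r$-potent matrix and for semigroups. First I would fix a maximal standard block triangularization of $\mathcal{S}$, say via permutation matrix $\mathbf{P}$, so that every $\mathbf{S}\in\mathcal{S}$ has the simultaneous upper block triangular form with diagonal blocks $\mathbf{S}_{11},\ldots,\mathbf{S}_{mm}$. Since $\mathbf{S}^r=\mathbf{S}$ for every member, the same computation as in Section~\ref{Sec:StructureRPotents} ($(\mathbf{P}^{-1}\mathbf{S}\mathbf{P})^r=\mathbf{P}^{-1}\mathbf{S}\mathbf{P}$) shows each diagonal block $\mathbf{S}_{ii}$ is itself $r$-potent, and the compression map $\mathbf{S}\mapsto\mathbf{S}_{ii}$ is multiplicative, so the $i$-th compression $\mathcal{S}_i=\{\mathbf{S}_{ii}:\mathbf{S}\in\mathcal{S}\}$ is a semigroup of nonnegative $r$-potent matrices.

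Next I would argue that each such compression $\mathcal{S}_i$ is \emph{indecomposable} as a semigroup, using exactly the gap argument from the single-matrix structure theorem: if $\mathcal{S}_i$ had a proper common standard invariant subspace $\mathcal{K}$, and $\mathcal{S}_i$ is the compression to the gap $\mathcal{N}_1\ominus\mathcal{N}_2$ of the maximal chain, then $\mathcal{N}_2\oplus\mathcal{K}$ would be a standard subspace invariant under all of $\mathcal{S}$ lying strictly between $\mathcal{N}_2$ and $\mathcal{N}_1$, contradicting maximality. Hence each non-zero $\mathcal{S}_i$ is an indecomposable nonnegative semigroup of $r$-potent matrices.

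The crux is then the contrapositive of Theorem~\ref{FiniteDimensionDecomposabilityOfSimpleSemigroup}: a semigroup of nonnegative $r$-potent matrices all of whose members have $\text{rank}>r-1$ is decomposable. Therefore an indecomposable semigroup of nonnegative $r$-potent matrices must contain at least one member of $\text{rank}\leq r-1$. Applying this to each non-zero compression $\mathcal{S}_i$ gives the conclusion: $\mathcal{S}_i$ contains some $\mathbf{S}_{ii}$ with $\text{rank}(\mathbf{S}_{ii})\leq r-1$.

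I expect the main obstacle to be the bookkeeping around the compression being a genuine \emph{multiplicative} map on the semigroup — i.e. verifying that $(\mathbf{ST})_{ii}=\mathbf{S}_{ii}\mathbf{T}_{ii}$ for block upper triangular matrices and hence that $\mathcal{S}_i$ really is a semigroup to which the earlier theorem applies — together with being careful that ``indecomposable diagonal block'' in the single-matrix sense upgrades correctly to ``indecomposable semigroup'' in the sense where one common permutation must work; both are routine but need to be spelled out to invoke Theorem~\ref{FiniteDimensionDecomposabilityOfSimpleSemigroup} cleanly.
```
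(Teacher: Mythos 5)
Your proposal is correct and follows essentially the same route as the paper: the gap/maximality argument shows each non-zero compression is an indecomposable semigroup of nonnegative $r$-potent matrices, and the contrapositive of Theorem~\ref{FiniteDimensionDecomposabilityOfSimpleSemigroup} then forces a member of $\text{rank}\leq r-1$ in each such block. Your extra remarks on the multiplicativity of the compression map are sensible bookkeeping that the paper leaves implicit.
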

\begin{proof}
By Theorem~\ref{FiniteDimensionDecomposabilityOfSimpleSemigroup}, $\mathcal{S}$ is decomposabable. Consider any maximal chain in $\mathcal{L}at'\mathcal{S}$ resulting in a standard block triangularization of $\mathcal{S}$. Consider any two subspaces $\mathcal{N}_1$ and $\mathcal{N}_2$ in this chain such that $\mathcal{N}_1\subseteq\mathcal{N}_2$ and $\mathcal{N}_2\ominus\mathcal{N}_1$ is a gap. If the compression of $\mathcal{S}$ to $\mathcal{N}_2\ominus\mathcal{N}_1$ is non-zero, it forms a semigroup of nonnegative $r$-potents. Further, it must be indecomposable, for otherwise, if it has a standard invariant subspace $\mathcal{K}$, then $\mathcal{N}_1\oplus\mathcal{K}$ is in $\mathcal{L}at'\mathcal{S}$ and lies strictly between $\mathcal{N}_1$ and $\mathcal{N}_2$, contradicting the maximality of this chain. Thus, every non-zero compression (or diagonal-block) constitutes an indecomposable semigroup of $r$-potent matrices. By Theorem~\ref{FiniteDimensionDecomposabilityOfSimpleSemigroup}, it must contain at least one element of $\text{rank}\leq r-1$.
\end{proof}

\section{Decomposability of permutation matrices}
\label{Sec:DecomposabilityPermutationMatrices}
In this section, we shall study decomposability (or rather, indecomposability) of permutation matrices. We start by noticing that an $n\times n$ circulant matrix generated by $\mathbf{e}_1,\mathbf{e}_2,\ldots,\mathbf{e}_n$ (standard basis vectors), given by
\begin{align*}
\left[\begin{array}{cccccc}
0&0&0&0&\cdots&1\\
0&1&0&0&\cdots&0\\
0&0&1&0&\cdots&0\\
\vdots&\vdots&\vdots&\vdots&\ddots&\vdots\\
1&0&0&0&\cdots&0
\end{array}
\right]_{n\times n},
\end{align*}
is indecomposable as we cannot find a standard invariant subspace. To generalize this observation as a formal result, we require the following lemma:
\begin{lemma}(\cite{RadjaviBook}, pp.106)
\label{LemmaDecomposabilityOfPermutationMatrices}
For a semigroup $\mathcal{S}$ of nonnegative matrices, the following are equivalent:
\begin{enumerate}
\item $\mathcal{S}$ is decomposable.
\item Every sum of members of $\mathcal{S}$ has a zero entry.
\end{enumerate}
\end{lemma}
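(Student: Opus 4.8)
The plan is to route both implications through Lemma~\ref{LemmaOnEquivalenceOfSemigroupDecomposability}, which already identifies decomposability of a nonnegative semigroup with the existence of a single position that vanishes in every member of $\mathcal{S}$. The whole task then reduces to reconciling that ``common zero entry'' condition with the hypothesis that every finite sum of members of $\mathcal{S}$ has a zero entry.

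For $(1)\Rightarrow(2)$ I would argue directly: decomposability gives, via Lemma~\ref{LemmaOnEquivalenceOfSemigroupDecomposability}, an index pair $(i_0,j_0)$ with $(\mathbf{S})_{i_0j_0}=0$ for every $\mathbf{S}\in\mathcal{S}$; since the $(i_0,j_0)$ entry of any sum $\mathbf{S}_1+\cdots+\mathbf{S}_m$ is the sum of the corresponding entries of the summands, it equals $0$, so every sum of members has a zero entry. This direction is essentially a one-liner.

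The substantive direction is $(2)\Rightarrow(1)$. Here the device I would use is that nonnegativity upgrades ``the sum has a zero in position $(i,j)$'' to ``each summand has a zero in position $(i,j)$'', because a sum of nonnegative reals vanishes only when every term does. Hence any finite subfamily of $\mathcal{S}$ has a common zero position (apply the hypothesis to the sum of its members; note also that each $\mathbf{S}\in\mathcal{S}$, being a one-term sum, has a zero entry). To obtain a single globally common zero I would then invoke finiteness: writing $Z(\mathbf{S})$ for the set of zero positions of $\mathbf{S}$, the family $\{Z(\mathbf{S}):\mathbf{S}\in\mathcal{S}\}$ consists of nonempty subsets of the finite set $\{1,\dots,n\}\times\{1,\dots,n\}$, so it has only finitely many distinct elements, and the previous observation makes the intersection of those nonempty; any pair in that intersection is a zero entry common to all of $\mathcal{S}$. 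Lemma~\ref{LemmaOnEquivalenceOfSemigroupDecomposability} then delivers decomposability, completing the proof.

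I expect the only real friction to be the passage from ``finitely many matrices at a time share a zero'' to ``the whole semigroup shares a zero'', and even that is handled purely by the finiteness of the set of matrix positions together with entrywise nonnegativity---no structure theory beyond Lemma~\ref{LemmaOnEquivalenceOfSemigroupDecomposability} is needed. I would also dispose of the degenerate cases at the outset: if $\mathcal{S}$ is empty or consists only of the zero matrix, both statements hold trivially.
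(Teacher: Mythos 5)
Your argument is correct. Note first that the paper itself offers no proof of this lemma---it is quoted verbatim from \cite{RadjaviBook}---so there is no in-paper argument to compare against; what you have done is supply a self-contained derivation from Lemma~\ref{LemmaOnEquivalenceOfSemigroupDecomposability}, whose condition (3) (``$\mathcal{S}$ has a common zero entry'') is exactly the right bridge. The forward direction is the one-liner you describe. For the converse, the two ingredients you isolate are precisely what is needed: entrywise nonnegativity converts ``the sum vanishes at $(i,j)$'' into ``every summand vanishes at $(i,j)$,'' so every finite subfamily of $\mathcal{S}$ shares a zero position; and since each $Z(\mathbf{S})$ is a subset of the finite set $\{1,\dots,n\}^2$, only finitely many distinct sets $Z_1,\dots,Z_m$ occur, so applying the hypothesis to a sum of one representative per class produces a position lying in $\bigcap_k Z_k$, hence common to all of $\mathcal{S}$. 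Lemma~\ref{LemmaOnEquivalenceOfSemigroupDecomposability} then gives decomposability. This is the standard way the ``sums'' formulation is obtained from the ``common zero entry'' formulation (and is essentially how the source reference treats it); the semigroup property plays no role in your reduction beyond its presence in the quoted lemma, which is fine. The only caveat worth recording is that your proof is only as strong as Lemma~\ref{LemmaOnEquivalenceOfSemigroupDecomposability}, which the paper also states without proof, so you have reduced one cited result to another rather than proved either from scratch---but as a derivation within the paper's framework it is complete and correct.
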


We now state the main result of this section.
\begin{theorem}
Group $\mathcal{G}$ of $n\times n$ permutation matrices generated by the standard basis vectors $\mathbf{e}_1,\mathbf{e}_2,\ldots,\mathbf{e}_n$, comprises of idempotent matrices, tripotent matrices, quadripotent matrices, pentapotent matrices,$\ldots, (n+1)$-potent matrices and is indecomposable.
\end{theorem}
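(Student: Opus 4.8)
The statement splits into two independent claims: that every potency $r\in\{2,3,\ldots,n+1\}$ is realised by some member of $\mathcal{G}$, and that $\mathcal{G}$ is indecomposable. The plan is to settle the first claim by writing down explicit cyclic permutation matrices, and the second by applying Lemma~\ref{LemmaDecomposabilityOfPermutationMatrices} to the sum of all members of $\mathcal{G}$.

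For the potency claim, I would first observe that the identity matrix $\mathbf{I}_n\in\mathcal{G}$ is idempotent, since $\mathbf{I}_n^2=\mathbf{I}_n$. Then, for each $r$ with $3\leq r\leq n+1$, I would take $\mathbf{C}_r\in\mathcal{G}$ to be the permutation matrix of the $(r-1)$-cycle $1\mapsto 2\mapsto\cdots\mapsto(r-1)\mapsto 1$ (fixing $r,r+1,\ldots,n$). This is a genuine element of $\mathcal{G}$ exactly because $r-1\leq n$ --- which is precisely the range of $r$ in the statement --- and since $\mathbf{C}_r^{r-1}=\mathbf{I}_n$ we get $\mathbf{C}_r^{r}=\mathbf{C}_r^{r-1}\mathbf{C}_r=\mathbf{C}_r$, so $\mathbf{C}_r$ is $r$-potent (and, being $\neq\mathbf{I}_n$ when $r\geq 3$, genuinely not idempotent). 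Letting $r$ range over $2,3,4,5,\ldots,n+1$ then exhibits an idempotent, a tripotent, a quadripotent, a pentapotent, \ldots, and an $(n+1)$-potent matrix inside $\mathcal{G}$.

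For indecomposability, I would form $\mathbf{T}=\sum_{\mathbf{P}\in\mathcal{G}}\mathbf{P}$ and compute its entries: the $(i,j)$-entry counts the permutations $\sigma$ of $\{1,\ldots,n\}$ with $\sigma(j)=i$, of which there are $(n-1)!$, independently of $i$ and $j$. Hence $\mathbf{T}=(n-1)!\,\mathbf{J}$ with $\mathbf{J}$ the all-ones matrix, and $\mathbf{T}$ has no zero entry. Since $\mathbf{T}$ is a sum of members of $\mathcal{G}$, Lemma~\ref{LemmaDecomposabilityOfPermutationMatrices} forces $\mathcal{G}$ to be indecomposable. (Alternatively, without the lemma: if some $\vee\{\mathbf{e}_i:i\in T\}$ with $\emptyset\neq T\subsetneq\{1,\ldots,n\}$ were invariant under all of $\mathcal{G}$, then for $i\in T$ and $j\notin T$ the transposition swapping $i$ and $j$ would send $\mathbf{e}_i$ out of that span --- a contradiction.)

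I do not expect a serious obstacle here: the identity $\sum_{\mathbf{P}\in\mathcal{G}}\mathbf{P}=(n-1)!\,\mathbf{J}$ and the existence of an $(r-1)$-cycle in $\mathcal{G}$ whenever $r-1\leq n$ are both elementary, and matching the upper index $n+1$ with ``the $r-1$ moved letters fit inside $\{1,\ldots,n\}$'' is the one small bookkeeping point to get right. The only degenerate case is $n=1$, where $\mathcal{G}=\{\mathbf{I}_1\}$ is a single idempotent and is indecomposable by convention; for $n\geq 2$ the argument above applies as stated.
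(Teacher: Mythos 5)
Your proposal is correct and follows essentially the same route as the paper: exhibit the $(r-1)$-cycles as the $r$-potent members for each $r=2,\ldots,n+1$, and then apply Lemma~\ref{LemmaDecomposabilityOfPermutationMatrices} by producing a sum of members of $\mathcal{G}$ with no zero entry (you sum all of $\mathcal{G}$ to get $(n-1)!\,\mathbf{J}$, while the paper sums the identity together with the cyclic shifts, which already covers every entry). The difference is immaterial, and your direct invariant-subspace argument is a harmless bonus.
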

\begin{proof}
The group comprises the following:
\begin{description}
\item[Idempotent Matrices]
\begin{align*}
\mathbf{I}_{n\times n}&=
\left[\begin{array}{cccccc}
1&0&0&0&\cdots&0\\
0&1&0&0&\cdots&0\\
0&0&1&0&\cdots&0\\
\vdots&\vdots&\vdots&\vdots&\ddots&\vdots\\
0&0&0&0&\cdots&1
\end{array}
\right]_{n\times n}
\end{align*}
\item[Tripotent Matrices] are formed by permuting two rows at a time. For example  
\begin{align*}
\mathbf{A}_{21}&=
\left[\begin{array}{cccccc}
0&1&0&0&\cdots&0\\
1&0&0&0&\cdots&0\\
0&0&1&0&\cdots&0\\
\vdots&\vdots&\vdots&\vdots&\ddots&\vdots\\
0&0&0&0&\cdots&1
\end{array}
\right]_{n\times n}
\end{align*}
\item[Quadripotent Matrices] are formed by permuting three rows at a time. For example  
\begin{align*}
\mathbf{A}_{231}&=
\left[\begin{array}{cccccc}
0&1&0&0&\cdots&0\\
0&0&1&0&\cdots&0\\
1&0&0&0&\cdots&0\\
0&0&0&1&\cdots&0\\
\vdots&\vdots&\vdots&\vdots&\ddots&\vdots\\
0&0&0&0&\cdots&1
\end{array}
\right]_{n\times n}
\end{align*}

\item[(n+1)-potent Matrices] are formed by permuting all $n$ rows at a time. For example  
\begin{align*}
\mathbf{A}_{234\ldots (n+1)1}&=
\left[\begin{array}{cccccc}
0&1&0&0&\cdots&0\\
0&0&1&0&\cdots&0\\
\vdots&\vdots&\vdots&\vdots&\ddots&\vdots\\
0&0&0&0&\cdots&1\\
1&0&0&0&\cdots&0
\end{array}.
\right]_{n\times n}
\end{align*}
\end{description}
Then, $\mathbf{I}_{n\times n}+\sum\mathbf{A}_{234\ldots (n+1)1}$ does not have any zero entry. Therefore, by Lemma~\ref{LemmaDecomposabilityOfPermutationMatrices}, $\mathcal{G}$ is indecomposable. 
\end{proof}

\bibliographystyle{amsplain}

\begin{thebibliography}{10}

\bibitem{HornAndJohnson} Roger A. Horn and Charles R. Johnson, {\it Matrix Analysis}, Cambridge University Press, ISBN 0521386322, (1990).

\bibitem {McCloskeyFirstPaper} Joseph P. McCloskey, Characterizations of $r$-Potent Matrices, {\it Mathematical Proceedings of the Cambridge Philosophical Society}, \textbf{96}, (1984), 213--222.

\bibitem {AlkaMarwaha} Alka Marwaha and Heydar Radjavi, Decomposability and Structure of nonnegative Bands in $\mathcal{M}_n(\mathbb{R})$, {\it Linear Algebra and its Applications}, \textbf{291}, (1999), 63-82.

\bibitem{Lutkepohl} Helmut L\"utkepohl, {\it Handbook of Matrices}, Wiley, New York,  ISBN 0-471-96688-6, (1996).

\bibitem{Herstein}  I. N. Herstein, {\it Topics in Algebra: 2nd Edition}, John Wiley and Sons (Asia), Singapore, ISBN-13: 9971-51-353-X, (1975).

\bibitem{AlkaMarwahaThesis} Alka Marwaha, Decomposability and Structure of Bands of Nonnegative Operators,{\it PhD Thesis}, Dalhausie University, Canada, (1996).

\bibitem {CarlMeyer} Carl Meyer, {\it Matrix Analysis and Applied Algebra}, Society for Industrial and Applied Mathematics, Philadelphia, PA, USA, ISBN: 0898714540, (2000).

\bibitem{RadjaviBook} Heydar Radjavi and Peter Rosenthal, {\it Simultaneous Triangularization}, Springer-Verlag, ISBN: 0387984674, Canada (2000).

\end{thebibliography}

\end{document}